\numberwithin{equation}{section}
\def \ni {\noindent}
\def\Z{Z\!\!\!Z}
\def\P{I\!\!P}
\def\F{{\mathbb F}}
\def\A{{\mathcal A}}
\def\H{{\mathcal H}}
\def\I{{\mathcal I}}
\def\C{{\mathbb C}}
\def\?{{\bf ??}}
\def\p{\partial}
\def\t{\theta}
\def\tt#1#2{{\t\left[\begin{matrix}{#1}\\ {#2}\end{matrix}\right]}}
\newcommand{\be}{\begin{equation}}
\newcommand{\ee}{\end{equation}}
\newcommand{\bes}{\begin{equation*}}
\newcommand{\ees}{\end{equation*}}
\newtheorem{theorem}{Theorem}[section]
\newtheorem{lemma}[theorem]{Lemma}
\newtheorem{prop}[theorem]{Proposition}
\newtheorem{definition}[theorem]{Definition}
\newtheorem{corollary}[theorem]{Corollary}
\newtheorem{remark}[theorem]{Remark}
\begin{document}

\title{ Plane quartics: the   matrix of  bitangents}
\author{Francesco Dalla Piazza}
\address{Universit\`a ``La Sapienza'', Dipartimento di Matematica, Piazzale A. Moro 2, I-00185, Roma,  Italy}
\email{dallapiazza@mat.uniroma1.it}
\author{Alessio Fiorentino}
%\address{Universit\`a ``La Sapienza'', Dipartimento di Matematica, Piazzale A. Moro 2, I-00185, Roma,   %Italy}
\email{fiorentino@mat.uniroma1.it}
%\author{Giorgio  Ottaviani}
%\address{Universit\`a  di Firenze, Dipartimento di Matematica, Viale  Morgagni 67/A, I-50134, Firenze,   %Italy}
%\email{ottavian@math.unifi.it}
\author{Riccardo Salvati Manni}
%\address{Universit\`a ``La Sapienza'', Dipartimento di Matematica, Piazzale A. Moro 2, I-00185, Roma,  %Italy}
\email{salvati@mat.uniroma1.it}
%\author{Edoardo Sernesi}
%\address{Universit\`a ``Roma Tre'', Dipartimento di Matematica, Largo S. Murialdo, I-00146, Roma,   Italy}
%\email{sernesi@mat.uniroma3.it}

\begin{abstract}

 Aronhold's classical result states that a plane quartic can be recovered by the configuration of any Aronhold systems of  bitangents, i.e.    special  $7$-tuples of bitangents such that the six points at which any subtriple of bitangents touches the quartic do not lie on the same conic in the projective plane. Lehavi (cf. \cite{lh}) proved that a smooth plane quartic  can be explicitly reconstructed from its $28$ bitangents; this result improved Aronhold's method of recovering the curve. In a 2011 paper \cite{PSV} Plaumann, Sturmfels and Vinzant introduced an  eight by eight 
 symmetric matrix parametrizing the  bitangents of a  nonsingular plane quartic. The starting  point of their construction is 
 Hesse's result  for which  every smooth quartic curve has exactly $36$ equivalence  classes of
linear symmetric determinantal representations.  
In this paper we tackle the inverse problem, i.e. the construction of the bitangent matrix starting from the 28 bitangents of the plane quartic.  

\end{abstract}
\maketitle

\section{Introduction} 
It is classically known that the number of the bitangents to a non singular curve of degree $d$ in the projective plane is given by the formula $\frac{1}{2}d(d-2)(d^2-9)$. The main properties of the bitangents have been deeply investigated by geometers since the late nineteenth century, particularly with reference to the first non trivial case, namely the case of degree $4$. Aronhold's classical result states that a plane quartic can be recovered by the configuration of any of the $288$ $7$-tuples of bitangents such that the six points at which any subtriple of bitangents touches the quartic do not lie on the same conic in the projective plane; these $7$-tuples of bitangents are known as Aronhold systems. Caporaso and Sernesi proved in \cite{CS} that the general plane quartic is uniquely determined by its $28$ bitangents; furthermore, they extended this result to general canonical curves of genus $g \geq 4$ (cf. \cite{CS2}).  In \cite{lh} Lehavi proved that a non singular plane quartic can be reconstructed from its $28$ bitangents,  providing a method to derive an explicit formula for the curve. These results have improved Aronhold's method of recovering the curve, because the knowledge of both the bitangents and their contact points on the curve is needed to get the configuration of the Aronhold systems, whereas the sole configuration of the bitangents is enough to describe the geometry of the plane quartic. In a 2011 paper \cite{PSV} Plaumann, Sturmfels and Vinzant introduced
%Some  years  ago in the paper \cite{PSV} has been introduced
an  eight by eight 
 symmetric matrix parametrizing the  bitangents of a  nonsingular plane quartic. The starting  point of their construction is 
 Hesse's result  for which  every smooth quartic curve has exactly $36$ equivalence  classes of
linear symmetric determinantal representations.  
Each determinantal representation corresponds to three quadrics in $\P^3$ intersecting in eight points. Once such a representation is chosen, the quartic can be described as the curve of the degenerate quadrics of the net generated by the three quadrics corresponding to the representation. Since each line of the net that joins two of the eight intersection points is a bitangent of the curve, this leads to define the eight by eight bitangent matrix.\\
\\
%To each  determinantal  representation are related   three quadrics in $\P^3$ that intersect in  eight points. 
\ni We briefly recall this construction. We refer to \cite{Do}  for details and historical notes.\\
Let $\P^3=\P(W)$ where $W$ is a $4$-dimensional vector space, and let
$$
X=\{x_1,\dots,x_8\} = Q_1 \cap Q_2 \cap Q_3 \subset \P^3
$$
 be an unordered set of $8$ distinct points in $\P^3$, which are complete intersection of
three quadrics. If the net
$$
 \Lambda_X:=|H^0(\P^3,\I_X(2))| = \langle Q_1, Q_2 , Q_3\rangle
$$
 contains no quadric of rank $\le 2$, then
  $X$ is called a \emph{regular Cayley octad} and $\Lambda_X$ is called a \emph{regular net}.
Now, let $C_X=C\subset \Lambda_X$ be the curve of the degenerate quadrics of the regular net $\Lambda_X$. If we denote by $\Delta $ the quartic hypersurface of singular quadrics
in the linear system
%$|H^0(\P^3,\O(2))|\cong \P^9$ 
$|H^0(\P^3,\mathcal{O}(2))|\cong \P^9$
of all the quadrics
of $\P^3$, then $C= \Lambda_X \cap \Delta$, thus $C$ is a plane
quartic, which  is called the {\it Hesse curve} of the net. After
choosing a basis $\{Q_1,Q_2,Q_3\}$ of $H^0(\P^3,\mathcal{I}_X(2))$,
the net can be identified to $\P^2$ by means of the homogeneous coordinates
$z=(z_1,z_2, z_3)$ as follows
$$
\xymatrix{ (z_1,z_2, z_3)\ar@{<->}[r]&
Q(z)=z_1Q_1 + z_2Q_2 + z_3Q_3}
$$
and the curve $C$ has equation det$(A(z))=0$, where
$A(z)$ is the symmetric $4\times 4$ matrix of the bilinear form associated with the generic quadric $Q(z)$. Note that the Hesse curve of a regular net of quadrics is
nonsingular and det$(A(z))=0$ is one of its $36$ determinantal representations \cite{hesse}.\\
Denoting by $L_{ij}=\langle x_i,x_j\rangle$ for each $1\le i < j \le 8$ the line joining
the points $x_i$ and $x_j$ of the Cayley octad, we obtain a set of $28$ lines in $\P^3$. These 28 lines $L_{ij}$ in $\P^3$ are in correspondence to the 28 bitangents of $C$ in $\P^2$, as any of the equations:
\be
\label{bitangent}
x_i^tA(z)x_j = 0,
\ee
actually define one of the bitangents to the curve (cf. Proposition \ref{P:sturm1}).\\
As for the Aronhold systems of $C$, a notable method to reconstruct one of them from the net $\Lambda_X$ is provided by the Steiner embedding (see \cite{Do2} for details). The singular points of the quadrics in $\Lambda_X$ describe a curve $\Gamma$ in $\P^3$, known as the {\it Steiner curve} of the net. As the net is regular, this curve turns out to be a smooth curve of degree $6$. More precisely, there exists an even theta characteristic $\theta$ such that the map $f:C\to\P^3$ sending a point $p \in C$ to the singular point of the corresponding quadric $Q(p)$ in $\P^3$, is defined by the complete linear series $|\omega_c + \theta|$. In particular, the map $f$ is an isomorphism on the image $f(C)=\Gamma$, and a bijection is defined between classes of regular nets of quadrics in $\P^3$ up to projective equivalences and isomorphism classes of smooth curves of genus $3$ associated with a fixed even theta characteristic. 
If an order $x_1, \dots , x_8$ is chosen for the eight points of the regular Cayley octad $X$ of the corresponding net $\Lambda_X$, the projection from the point $x_8$ onto $\P^2$ sends $x_1, \dots , x_7$ to a set of points $y_1, \dots , y_7$ and the Steiner curve $\Gamma$ to a sextic with seven double points at $y_1, \dots , y_7$. The images of the exceptional curves blown up from these points $y_1, \dots , y_7$ are the seven bitangents corresponding to the lines $L_{i8}$ joining $x_i$ and $x_8$; this set of bitangents is actually an Aronhold system for the curve $C$.\\

The algorithm described in \cite{PSV} is meant to compute the matrix $A(z)$ for a non singular plane quartic $C$, described by the equation:
\bes
f(z_1,z_2,z_3)=c_{400}z_1^4+c_{310}z_1^3z_2+c_{301}z_1^3z_3+c_{220}z_1^2z_2^2+c_{211}z_1^2z_2z_3+\dots +c_{004}z_3^4,
\ees
where $c_{ijk}$ are the 15 coefficients of the quartic.
A determinantal representation of $C$ is obtained in terms of 3 suitable $4\times 4$ symmetric matrices as follows:
\bes
f(x,y,z)=\det(z_1A_1+z_2A_2+z_3A_3)\equiv \det(A(z)),
\ees
and $A_1$, $A_2$ and $A_3$ are the matrices of the bilinear forms associated to three quadrics $Q_1$, $Q_2$ and $Q_3$ in $\P^3$. The algorithm determines $A(z)$ from the bitangents of the curve. The 28 bitangents $b_m(\tau, z)$ of the plane quartic $C$ correspond to the $28$ gradients of the odd theta functions, as they are related to the first term of the Taylor expansion of the theta functions with odd characteristics $\theta_m(\tau, z)$. A $4\times 4$ matrix $V$ is built after a choice of three bitangents among the $\binom{28}{3}=3276$ possible choices. If the triple of bitangents is not a subtriple of an Aronhold system, then the determinant of the resulting matrix $V$ is identically null. Hence, the algorithm needs to start from a triple of bitangents that is contained in an Aronhold system; the number of these triples is 2016 and the corresponding triples of gradients of odd theta functions are called azygetic. The other $3276-2016=1260$ triples are known as syzygetic.
Once an azygetic triple is fixed, the matrix $A(z)$ is given by the adjoint of $V$ divided by $f^2$.
These 2016 determinantal representations factorize into 36 equivalence classes (two representations $A(z)$ and $A'(z)$ are equivalent whenever they are conjugated under the action of $\rm{GL_4}$, i.e $A'(z)=U^tA(z)U$, for $U\in\rm{GL_4}$).\\
The bitangent matrix originates from $A(z)$. We can consider the $8\times 4$ matrix given by the coordinates of the eight points of the Cayley octad:
\[
\texttt{X}:=\begin{pmatrix}x_{10}&x_{11}&x_{12}&x_{13} \\
x_{20}&x_{21}&x_{22}&x_{23} \\
\vdots&\vdots&\vdots&\vdots\\
x_{80}&x_{81}&x_{82}&x_{83}
\end{pmatrix},
\]
and the  $8\times 8$ symmetric matrix:
\begin{equation}\label{E:hesse2}
L_X(z) := \texttt{X}\ A(z)\ ^t\texttt{X}.
\end{equation}
Clearly, $L_X(z)$ is a matrix of rank 4 with zero entries on the main diagonal. The 28 entries of $L_X(z)$ off the main
diagonal are linear forms in $z$ that define the bitangents
of $C$, as seen in (\ref{bitangent}).
Notice that the determinantal representations given by each of the  $\binom{8}{4}=70$ principal $4\times 4$ minors of the matrix represent the same quartic and lie in the same equivalence class.
Once a representation is determined, the other 35 inequivalent representations can be obtained by acting properly on the corresponding Cayley octad. Each of the $2016=56\cdot 36$ azygetic triples appears as a product of the corresponding bitangents in exactly one of the $\binom{8}{3}=56$ principal $3\times 3$ minor of one of the 36 inequivalent bitangent matrices; thus these minors are parametrized by azygetic triples.

In this paper we tackle the inverse problem, i.e. the construction of the   bitangent matrix   starting from the 28 bitangents  of the plane quartic. We need to start by a suitable $8 \times 8$ matrix. Since we are interested in azygetic triples in order to obtain the   bitangent matrix, we resort to Aronhold systems to build such a matrix.
For any fixed even characteristic, there exist eight corresponding Aronhold sets, see Section \ref{aronhold}, ($288=36\cdot8$), which are obtained by translation from a chosen one. These $7$-tuples of odd characteristics and the even one will be the rows of the matrix, whose $3\times 3$ principal minors will contain 56 distinct azygetic triples ($2016=36\cdot 56$). Thus we will work with the following bitangent matrix:
\bes 
\label{m1}
\mathcal{M}:=
\begin{pmatrix}
0 & b_{77} & b_{64} & b_{51} & b_{46} & b_{23} & b_{15} & b_{32} \\
b_{77} & 0 & b_{13} & b_{26} & b_{31} & b_{54} & b_{62} & b_{45} \\
b_{64} & b_{13} & 0 & b_{35} & b_{22} & b_{47} & b_{71} & b_{56} \\
b_{51} & b_{26} & b_{35} & 0 & b_{17} & b_{72} & b_{44} & b_{63} \\
b_{46} & b_{31} & b_{22} & b_{17} & 0 & b_{65} & b_{53} & b_{74} \\
b_{23} & b_{54} & b_{47} & b_{72} & b_{65} & 0 & b_{36} & b_{11} \\
b_{15} & b_{62} & b_{71} & b_{44} & b_{53} & b_{36} & 0 & b_{27} \\
b_{32} & b_{45} & b_{56} & b_{63} & b_{74} & b_{11} & b_{27} & 0 \\
\end{pmatrix},
\ees
see Section \ref{bitangmatrix} for the explanation of the  meaning  of the indeces.\smallskip

Notice that generally this matrix has rank eight,
%But this  matrix  has rank eight,
so one  has to  determine suitable coefficients $c_{ij}$ in  such  a way that the matrix
$ (c_{ij} b_{ij})$ has rank  four.
The aim of this paper is to determine uniquely such coefficients,   up to  congruences  by diagonal matrices, once an even characteristic $m$  and  a compatible  Aronhold set of characteristics (i.e  a level  two structure of the  moduli space of  principally polarized abelian  varieties of  genus 3) are given. As multilinear algebra techniques are not sufficient to determine these coefficients, we will also need to carefully use Riemann's relations and Jacobi's formula.\\
%once it is given an even characteristic $m$  and  a compatible  Aronhold set of characteristics (i.e  a level  two structure of the  moduli space of principally polarized abelian varieties of  genus 3).\smallskip
The coefficients will turn out to be modular functions holomorphic along the  locus of the period matrices of smooth plane quartics (cf. Theorem \ref{main}).
Any other inequivalent bitangent matrix will be obtained by changing the even characteristic and considering the eight corresponding Aronhold sets.

\medskip
\section {Aknowledgments}
We are grateful to  Giorgio Ottaviani  and Edoardo Sernesi  for  explaining to us the construction of the  bitangent  matrix and the  beautiful geometry  related to  plane quartics.  The first three sections of this  paper have  been strongly influenced  by discussions that  the senior author had with them. We are also grateful to Igor V. Dolgachev for pointing out inaccuracies in the first version of the paper. 

\section{Aronhold systems.}  \label{aronhold}
In this section we introduce Aronhold systems of bitangents and Aronhold sets of theta characteristics and recall some basic facts about characteristics and the action of the symplectic group on them.

The next definition is of central
importance in the geometry of plane quartics. Let $C$ be a
nonsingular plane quartic.

\begin{definition}\label{D:aron1}
A $7$-tuple $\{\ell_1,\dots, \ell_7\}$  of bitangent lines to $C$
is called an {\rm Aronhold system of bitangents} if for each
triple $\ell_i,\ell_j,\ell_k$ the six points of contact of
$\ell_i\cup \ell_j\cup \ell_k$ with $C$ are not on a conic.
\end{definition}

Not all $7$-tuples of bitangents are Aronhold systems. There are
exactly $288$ Aronhold systems among the ${28\choose 7}$
$7$-tuples of bitangents of $C$ (for more details we refer to \cite{Do})  Denote by $\theta_i$ the effective half-canonical divisor
such that $2\theta_i = C\cap \ell_i$ (i.e. $\theta_i$, or
$\mathcal{O}(\theta_i)$, is  an odd theta-characteristic). The
condition that $\{\ell_1,\dots, \ell_7\}$ is an Aronhold system is
equivalent to the condition that for each triple of pairwise
distinct indices $i, j, k$ we have
$$
|2K- \theta_i - \theta_j-\theta_k| = \emptyset
$$
and replacing $2K$ by $2\theta_i+2\theta_j$ the condition is seen
to be equivalent to the following: $\theta_i+\theta_j-\theta_k$ is
an even theta-characteristic for each $i\ne j\ne k$. This can
be taken as another definition of Aronhold system (cf. also Definition \ref{E:fundsys}).

\begin{definition}\label{D:aron2}
\emph{An Aronhold set} on a non-hyperelliptic curve $C$ of
genus 3 is a 7-tuple $\{\theta_1, \dots, \theta_7\}$ of distinct
odd theta-characteristics such that $\theta_i + \theta_j-\theta_k$
is an even theta-characteristic for each triple of pairwise
distinct indices $i, j, k$.
\end{definition}

\ni We  have a purely  combinatoric interpretation of  the above description.\smallskip
%\ni We first recall some basic facts about characteristics
%and the action of the symplectic group on them.

 \ni A characteristic $m$ is a column
vector in $\Z^{2g}$, with $m'$ and $m''$ as first and second
entry vectors. If we set:
\be \label{eq2}
e(m)=(-1)^{^{t}m'm''},
\ee
then
$m$
is called even or odd according as $e(m)=1$ or $-1$.
For any triplet $m_1, m_2, m_3$ of characteristics we set
\be \label{eq3}
e(m_1, m_2, m_3)=e(m_1)e(m_2)e(m_3)e(m_1+m_2+m_3).
\ee
A sequence $m_1,\dots,m_r$ of characteristics is essentially independent
if for any choice of an even number of indeces between $1$ and $r$ the sum of the corresponding characteristics is not congruent to
$0\, \rm{mod}\,2$.

The unique action of $\Gamma_g= {\rm Sp}( 2g, \Z)$ on the set of characteristics $\rm{mod}\,
2$ keeping invariant \eqref{eq2}, \eqref{eq3} and the condition of being essentially
independent is defined by

$$\sigma\cdot m:=\begin{pmatrix}
 D &  -C \cr
 -B  & A \end{pmatrix}
\begin{pmatrix}
m'\cr
m''
\end{pmatrix} + \begin{pmatrix} diag(C^t D)\cr
diag(A^t B) \end{pmatrix} .$$
Henceforward, we often shall consider characteristics with $0$ and
$1$
as entries. In this situation a special role is played by
sequences of characteristics that form a  fundamental system, defined as follows. 
\begin{definition}\label{E:fundsys}
A sequence of $2g+2$ characteristics  in $\F_2^{2g}$  is   a fundamental system if all triplets are azygetic, i.e   
$$e(m_i, m_j, m_k)=-1,$$ 
for all indeces $1\leq i<j<k\leq 2g+2$.
\end{definition}

Fundamental systems exist and are all conjugate
under an extension of $\Gamma_g$ by translations, we  refer to \cite{ig80} and \cite{fayriemann}  for details.
The  number of odd characteristics in a fundamental system is always congruent to $g\, \rm{mod}\, 4.$
So when $g=3$ we have fundamental systems with $3$ or $7 $  odd characteristics.\bigskip

From now on we fix our attention on genus 3 case. 

\begin{definition}\label{E:fundsys}
Let  $$m_0, n_1, n_2, \dots, n_7$$ be a fundamental system with one  even characteristic, $m_0$, and seven odd characteristics. In this case $$ n_1, n_2, \dots, n_7$$
is called an Aronhold set and necessarily
$m_0=\sum_{i=1}^7 n_i$.
\end{definition}\medskip
  
There are exactly $288= 36\cdot 8$ such systems, hence each even characteristic appears exactly in eight such fundamental systems.  We remark that the ordered set of fundamental systems are 
$$288\cdot 7!= 36\cdot 8!=\vert {\rm Sp}( 6, \F_2)\vert.$$
Concerning these fundamental systems with a  fixed  even  characteristic $m_0$, we have the following Lemma.
\begin{lemma}
A fundamental system $m_0, n_1, n_2, \dots, n_7$ determines the remaining 7 via translations.\end{lemma}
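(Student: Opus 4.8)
The plan is to transport the whole problem into the quadratic-form picture underlying $e$, where ``translation'' becomes addition of a fixed vector of $\F_2^{6}$, and then to pin down exactly which translations fix the even characteristic $m_0$.

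First I would write $e(m)=(-1)^{q_0(m)}$ with $q_0(m)={}^{t}m'm''\bmod 2$, so that $q_0$ is a quadratic form on $\F_2^{6}$ whose associated bilinear form is the symplectic form $B(x,y)={}^{t}x'y''+{}^{t}y'x''$, the even characteristics being its zeros and the odd ones its non-zeros. Expanding $q_0(m_1+m_2+m_3)$ by polarization gives the identity $e(m_1,m_2,m_3)=(-1)^{B(m_1,m_2)+B(m_1,m_3)+B(m_2,m_3)}$, so the azygetic character of a triple depends only on the symplectic pairings. Since $B(m_i+t,m_j+t)=B(m_i,m_j)+B(m_i,t)+B(m_j,t)$, the sum of the three pairings over a triple is unchanged under a simultaneous translation $m\mapsto m+t$; likewise the sum over any even-sized subset is translation-invariant, being $\sum_{k\in S}m_k+|S|\,t=\sum_{k\in S}m_k$ when $|S|$ is even. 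Hence translation by any $t\in\F_2^{6}$ carries fundamental systems to fundamental systems, preserving both azygety and essential independence.

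Next I would determine the admissible translations. A translate $\{m_0+t,\,n_1+t,\dots,n_7+t\}$ can be a fundamental system with the same even characteristic $m_0$ only if $m_0$ occurs among its members, which forces $t=0$ or $t=m_0+n_j$ for some $j$. This yields eight candidates $t_0=0$ and $t_j=m_0+n_j$ $(1\le j\le 7)$. For each $t_j$ one has $n_j+t_j=m_0$, while the other members become $m_0+t_j=n_j$ and $n_i+t_j=\sum_{k\ne i,j}n_k$ for $i\ne j$; to conclude that $m_0$ is genuinely the even element of the translate, I must show that every sum of five of the $n_k$ is odd.

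This parity statement is the heart of the argument, and the main obstacle. From the azygetic condition $B(n_i,n_j)+B(n_i,n_k)+B(n_j,n_k)=1$ one deduces, by the usual coboundary (balanced-graph) argument, that $B(n_i,n_j)=1+b_i+b_j$ for suitable $b_i\in\F_2$. Summing over a subset $S$ of the odd characteristics then gives $q_0\!\left(\sum_{k\in S}n_k\right)=|S|+\binom{|S|}{2}+(|S|-1)\sum_{k\in S}b_k\bmod 2$, which for $|S|=5$ equals $1$ independently of the $b_i$; this is precisely the required oddness. Finally, essential independence (no even-sized subset of a fundamental system sums to $0$) shows the eight translates are pairwise distinct, since a coincidence would force a sum of four or six of the $n_k$ to vanish. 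As there are exactly eight fundamental systems with even part $m_0$, the seven nonzero translations produce all the remaining ones, which is the assertion of the lemma.
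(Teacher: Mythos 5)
Your proposal follows exactly the paper's route: the paper's entire proof is the one-line assertion that the remaining seven systems are obtained by translating $m_0, n_1,\dots,n_7$ by $m_0+n_i$, $i=1,\dots,7$, and your argument identifies precisely these translations, verifies that translation preserves azygety (via the symplectic pairing computation), and checks by the parity formula $q_0\bigl(\sum_{k\in S}n_k\bigr)=|S|+\binom{|S|}{2}+(|S|-1)\sum_{k\in S}b_k$ that each translate again has $m_0$ as its unique even member. All of that is correct, and it supplies the verifications the paper leaves implicit.

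The one step that does not stand as written is distinctness. You invoke ``essential independence (no even-sized subset of a fundamental system sums to $0$)'' as a known property of fundamental systems. Under the paper's definition this is literally false: since necessarily $m_0=\sum_{i=1}^{7}n_i$, the sum of all eight members --- an even-sized subset --- is $0$; and in any case the paper never asserts essential independence for fundamental systems, so the instance you actually need (no \emph{four} of the $n_k$ sum to zero; the six-element case is trivial because $\sum_{l\ne i}n_l=m_0+n_i\ne 0$) must be proved. It does follow from azygety, and your own coboundary parametrization $B(n_i,n_j)=1+b_i+b_j$ closes it: if $n_a+n_b+n_c+n_d=0$, then expanding $B(n_a+n_b+n_c+n_d,\,n_a)=0$ by bilinearity gives $1+b_a+b_b+b_c+b_d=0$, while expanding $B(n_a+n_b+n_c+n_d,\,n_e)=0$ for any $e\notin\{a,b,c,d\}$ gives $b_a+b_b+b_c+b_d=0$, a contradiction. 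Note that your displayed parity formula alone cannot do this job, since for $|S|=4$ it only yields $\sum_{k\in S}b_k=0$, which is consistent. With this two-line repair the argument is complete and coincides with the paper's intended proof.
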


\begin{proof}
The other seven fundamental systems can be obtained translating the initial fundamental system  $m_0, n_1, n_2, \dots, n_7$  with
$m_0+n_i$ with $i=1,\dots, 7$.
\end{proof}
 
  \begin{remark}{\label{E:matr}} We observe that the  $8\times 8$ matrix
 
$$
%\left[\begin{array}{ccccc}
\begin{pmatrix}
  m_0&n_1&n_2&\dots& n_7\\
 n_1& m_0&(m_0+n_1)+ n_2& \dots & (m_0+n_1)+ n_7\\
 n_2& (m_0+n_2)+ n_1&m_0& \dots & (m_0+n_2)+ n_7\\
\dots & \dots &\dots & \dots&\dots \\
    n_7& (m_0+n_7)+ n_1&\dots & \dots& m_0\\
%    \end{array}\right]
\end{pmatrix}
$$
   is symmetric in the symbols.
    It is unique once we fix  a row, up to permutation of rows (and corresponding symmetric permutation of columns).    
Here is an explicit example  (we use a  row notation):
$$%\left[\begin{array}{cccccccc}
\begin{pmatrix}
[000,000]&[111,111]&[110,100]&[101,001]&[100,110]&[010,011]&[001,101]&[011,010]\\
{[111,111]}&[000,000]&[001,011]&[010,110]&[011,001]&[101,100]&[110,010]&[100,101]\\
{[110,100]}&[001,011]&[000,000]&[011,101]&[010,010]&[100,111]&[111,001]&[101,110]\\
{[101,001]}&[010,110]&[011,101]&[000,000]&[001,111]&[111,010]&[100,100]&[110,011]\\
{[100,110]}&[011,001]&[010,010]&[001,111]&[000,000]&[110,101]&[101,011]&[111,100]\\
{[010,011]}&[101,100]&[100,111]&[111,010]&[110,101]&[000,000]&[011,110]&[001,001]\\
{[001,101]}&[110,010]&[111,001]&[100,100]&[101,011]&[011,110]&[000,000]&[010,111]\\
{[011,010]}&[100,101]&[101,110]&[110,011]&[111,100]&[001,001]&[010,111]&[000,000]\\
%\end{array}\right]
\end{pmatrix}.$$
Notice that 36 essentially different such matrices can be constructed, corresponding to the 36 even characteristics. 
\end{remark}
In the next section we will show how this matrix can be used to build the bitangent matrix by means of gradients of odd theta functions. 

\section{Theta Functions} \label{thetafunct}

 We intend  to give an   explicit analytic  expression for the  bitangents.  The  main tool will be theta functions.
We denote by $\H_g$ the {\it Siegel upper half-space}, i.e. the
space of complex symmetric $g\times g$ matrices with positive definite
imaginary part. An element $\tau\in\H_g$ is called a {\it period
matrix}, and defines the complex abelian variety $X_\tau:=\C^g/\Z^g+\tau
\Z^g$. The group $\Gamma_g:={\rm Sp}(2g,\Z)$ acts on $\H_g$ by
automorphisms. For $$\gamma:=\begin{pmatrix} a&b\\
c&d\end{pmatrix}\in{\rm Sp}(2g,\Z)$$ the action is
$\gamma\cdot\tau:=(a\tau+b)(c\tau+d)^{-1}$. The quotient of $\H_g$ by
the action of the symplectic group is the moduli space of
principally polarized abelian varieties (ppavs): $\A_g:=\H_g/{\rm
Sp}(2g,\Z)$. The case $g=1$ is special and in the following we 
will always assume $g>1$.

We define the {\it level} subgroups of the symplectic group to be
$$
\Gamma_g(n):=\left\lbrace\gamma=\begin{pmatrix} a&b\\ c&d\end{pmatrix}
\in\Gamma_g\, |\, \gamma\equiv\begin{pmatrix} 1&0\\
0&1\end{pmatrix}\ {\rm mod}\ n\right\rbrace,
$$
$$
\Gamma_g(n,2n):=\left\lbrace\gamma\in\Gamma_g(n)\, |\, {\rm diag}(a^tb)\equiv{\rm diag}
(c^td)\equiv0\ {\rm mod}\ 2n\right\rbrace.
$$
The corresponding {\it level moduli spaces of ppavs} are denoted
$\A_g(n)$ and $\A_g(n,2n)$, respectively.

A holomorphic  function $F:\H_g\to\C$ is called a {\it modular form of weight $k$ with
respect to $\Gamma\subset\Gamma_g$} if
$$
F(\gamma\cdot\tau)=\det(c\tau+d)^kF(\tau),\quad \forall \gamma=
\begin{pmatrix}a&b\\ c&d\end{pmatrix}\in\Gamma,\ \forall \tau\in\H_g.
$$
More generally, let $\rho:{\rm GL}(g,\C)\to\operatorname{End} V$ be some representation.
Then a map $F:\H_g\to V$ is called a {\it $\rho$- or $V$-valued modular form}, or,
if there is no ambiguity in the choice of $\rho$,
simply
a {\it vector-valued modular form}, 
%if the choice of $\rho$ is clear,
with
respect to $\Gamma\subset\Gamma_g$ when
$$
F(\gamma\cdot\tau)=\rho(c\tau+d)F(\tau),\quad \forall \gamma=
\begin{pmatrix} a&b\\ c&d\end{pmatrix}\in\Gamma,\
\forall \tau\in\H_g.
$$

For $m',m''\in \Z^g$ and $z\in \C^g$ 
we define the {\it   theta function with characteristic} $m= \left[{}^{m'}_{m''} \right]$ to be
%
%
%For $m',m''\in \{0,1\}^g$, thought of as vectors of zeros and ones, and $z\in \C^g$ 
%we define the {\it   theta function with characteristic} $m= {m'\choose m''}$ to be
$$
\tt {m'}{m''}(\tau,z):=\theta_m(\tau, z):=\sum\limits_{p\in\Z^g} \exp \pi i \left[\left(
p+\frac{m'}{2},\tau(p+\frac{m'}{2})\right)+2\left(p+\frac{m'}{2},z+
\frac{m''}{2}\right)\right],
$$ 
where $(\cdot,\cdot)$ at the exponent denotes the usual scalar product.
Here we list some properties of the theta functions. 
First, we observe that
\bes
\theta_{m+2n}(\tau, z)=(-1)^{^t m ' n''}\theta_m(\tau, z), \qquad n\in\Z^{2g}.
\ees
Hence, the theta functions with characteristics can be parametrized by $2^{2g}$
vector columns $m'$, $m''$ with $m'$ and $m''$ thought as entries in $\{0,1\}^g$. Note that these are the roots of the canonical bundle. The preceding formula is called {\it reduction formula}.
Henceforward, we refer to such characteristics as  {\it reduced characteristics} and to the corresponding theta functions as theta functions with  half integral characteristics; clearly all the properties stated in Section \ref{aronhold} also hold in this case.
Then, we recall the behavior of the theta functions under a change of sign of the $z$ variable:
$$\tt {m'}{m''}(\tau,-z)=e(m)\tt {m'}{m''}(\tau,z).$$
The following formula shows that adding a so called {\it half period}, $\tau\frac{m'}{2}+\frac{m''}{2}$, to the argument $z$ actually permutes the functions with  half integral characteristics (see \cite{ig1} or \cite{rafabook}):
$$
\tt {m'}{m''}(\tau,z)=\exp{(\pi i\left[\left(\frac{m'}{2},\tau\frac{m'}{2} \right) +2\left(\frac{m'}{2},z+\frac{m''}{2} \right)\right])}\tt {0}{0}(\tau,z+\tau\frac{m'}{2}+\frac{m''}{2}).
$$

The {\it  reduced characteristic} $m$ is called {\it even} or {\it
odd} depending on whether the scalar product $m'\cdot m'' \in\Z_2$ is
zero or one and the corresponding  theta function is even or odd
in $z$, respectively. The number of even (resp. odd) theta characteristics is
$2^{g-1}(2^g+1)$ (resp. $2^{g-1}(2^g-1)$).
The transformation law for theta functions under the action of the symplectic
group is (see \cite{ig1}): 
$$
\tt{m'}{m''}(\tau,z)=\phi({m'},{m''},\gamma,\tau,z)\det
(c\tau+d)^{1/2}\theta\left[(^t\gamma^{-1})\begin{pmatrix}{m'}\\ {m''}
\end{pmatrix}\right](\gamma\cdot\tau,(c\tau+d)z),
$$
where $\phi$ is some complicated explicit function, and the action of
$^t\gamma^{-1}$ on characteristics is taken modulo integers. It is further
known (see \cite{ig1}, \cite{sm1}) that for $\gamma\in\Gamma_g(4,8)$ we
have $\phi|_{z=0}=1$, while $^t\gamma^{-1}$ acts trivially on the
characteristics $m$. Thus the values of theta functions at $z=0$, called {\it theta constants}, are modular forms of weight
one half with respect to $\Gamma_g(4,8)$. We will denote them with $\theta_m$.

The group $\Gamma_g(2)/
\Gamma_g(4,8)$ acts on the set of theta-constants $\theta_m$ by
certain characters whose values are fourth roots of the unity, as shown in \cite{sm1}.
The action of
$\Gamma_g/\Gamma_g(2)$ on the set of theta  with half integral characteristics  is by
permutations. Since the group $ \Gamma_g(1,2)$ fixes the null characteristic, it acts on $\theta_0$ by a multiplier.

All odd theta constants  with half integral characteristics vanish identically, as the
corresponding theta functions are odd functions of $z$, and thus there
are $2^{g-1}(2^g+1)$ non-trivial theta constants.

Differentiating the theta transformation law above with respect 
to different $z_i$ and then evaluating at $z=0$, we see that for $\gamma\in
\Gamma_g(4,8)$ and $m=\left[{}^{m'}_{m''}\right]$ odd
$$
\frac{\p}{\p z_i}\tt{m'}{m''}(\tau,z)|_{z=0}=\det(c\tau+d)^{1/2}\sum\limits_j
(c\tau+d)_{ij}\frac{\p}{\p z_j}\tt{m'}{m''}(\gamma\cdot\tau,(c\tau+d)z)|_{z=0},
$$
in other words the gradient vector $\rm{grad}_z\,
\tt{m'}{m''}(\tau,0)$ is a $\mathbb C^g$-valued modular 
form with respect to $\Gamma_g(4,8)$ under the representation 
$\rho(M)=(\det M)^{1/2} M$, for $M\in\Gamma_g(4,8)$.

The set of all even theta constants defines the map
$$
\P \rm{Th}:\A_g(4,8)\to \P^{2^{ g-1}(2^g+1)-1}, \qquad \bar{\tau}\mapsto [\cdots,\theta_m(\tau),\cdots],
$$ 
with $\bar{\tau}\in A_g(4,8)=\H_g/\Gamma_g(4,8)$ and $\tau$ a representative of the equivalence class $\bar{\tau}$.
 It is known that the map $\P \rm{Th}$ is injective, see \cite{ig1} and
 references therein. 
 Considering the set of gradients of all odd theta
 functions at zero gives the map$$
 \rm{grTh}:\H_g\to (\C^g)^{\times 2^{g-1}(2^g-1)},\qquad \tau\mapsto\rm{grTh}(\tau):=\left\lbrace
 \cdots,grad_z\tt{m'}{m''},\cdots\right\rbrace_{{\rm all\ odd}\, m},$$
 which due to modular properties descends to the quotient
 $$\P \rm{grTh}:\A_g(4,8)\to  (\C^g)^{\times 2^{g-1}(2^g-1)}/\rho({\rm GL}(g,\C)),$$
 where ${\rm GL}(g,\C)$ acts simultaneously on all $\C^g$'s in the product by $\rho$.

 The image of $\P
 \rm{grTh}$ actually lies in the Grassmannian,
 \begin{equation*}
 \P \rm{grTh}:\A_g(4,8)\to{\rm Gr}_\C(g,\,2^{g-1}(2^g-1))
 \end{equation*}
 of $g$-dimensional subspaces in $\C^{2^{g-1}(2^g-1)}$. The Pl\"ucker
 coordinates of this map are modular forms of weight $\frac{g}{2}+1$ and
 have been extensively studied, see \cite{ig2,sm3,gsm}. Moreover, in \cite{gsm}, it is implicitely proved the following proposition.  
\begin{prop}
The map
\begin{equation*}
 \P \rm{grTh}:\A_3(4,8)\to{\rm Gr}_\C(3,\, 28)
 \end{equation*}
is  injective.
\end{prop}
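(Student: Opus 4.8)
The plan is to read a point of $\mathrm{Gr}_\C(3,28)$ concretely and then reduce the statement to the already-known injectivity of $\P\mathrm{Th}$. A point in the image is the row span $V$ of the $3\times 28$ matrix $G(\tau)$ whose columns are the gradients $\mathrm{grad}_z\theta_m(\tau,0)$ at the odd characteristics $m$; equivalently it records the full vector of Pl\"ucker coordinates $\bigl(\dots,D(m_1,m_2,m_3),\dots\bigr)$ up to a common scalar, where $D(m_1,m_2,m_3):=\det[\,\mathrm{grad}_z\theta_{m_1},\mathrm{grad}_z\theta_{m_2},\mathrm{grad}_z\theta_{m_3}\,]$ ranges over the $\binom{28}{3}$ maximal minors. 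I would first stress that this datum remembers not merely the $28$ bitangent lines $[\mathrm{grad}_z\theta_m]\in\P^2$ but also their relative scalings: under $\Gamma_3(2)/\Gamma_3(4,8)$ the columns of $G(\tau)$ are rescaled by fourth roots of unity that \emph{depend on} $m$, and rescaling columns individually moves the subspace $V$. This is exactly why $\P\mathrm{grTh}$ descends to $\A_3(4,8)$ but not to $\A_3(2)$, and it signals that the Pl\"ucker data should carry enough information to pin down the level-$(4,8)$ point.

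The strategy is then to recover the $36$ even theta constants $\theta_n(\tau)$, up to one common scalar, from these Pl\"ucker coordinates; since $\P\mathrm{Th}:\A_3(4,8)\to\P^{35}$ is injective (cf. \cite{ig1}), this suffices. The bridge is the genus-three generalization of Jacobi's derivative formula (Frobenius; see \cite{gsm}): a minor $D(m_1,m_2,m_3)$ vanishes identically exactly on the syzygetic triples, while on each of the $2016$ azygetic triples one has
\[
D(m_1,m_2,m_3)=\pm\,\pi^3\prod_{n\in S(m_1,m_2,m_3)}\theta_n ,
\]
a universal constant times a monomial in the even theta constants, with $S$ determined combinatorially by the triple. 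A weight count is consistent with this: from the transformation law for $\mathrm{grad}_z\theta_m$ one gets $D(\gamma\cdot\tau)=\det(c\tau+d)^{-5/2}D(\tau)$, so $D$ has weight $5/2$ and $|S|=5$ (with multiplicity). Thus every nonvanishing Pl\"ucker coordinate equals, up to the common scalar, a prescribed weight-$5/2$ monomial in the $\theta_n$, and the azygetic/syzygetic dichotomy recalled in Section \ref{aronhold} is precisely the nonvanishing pattern of these monomials.

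It remains to invert the monomial map. Away from the loci where some $\theta_n$ vanishes I would take logarithms and reduce the recovery of $[\dots:\theta_n:\dots]\in\P^{35}$ to the statement that the integer incidence matrix of azygetic triples against even characteristics (size $2016\times 36$) has rank $35$ — a finite combinatorial check governed by Riemann's relations and the $\mathrm{Sp}(6,\F_2)$-action on characteristics. This yields $\P\mathrm{Th}(\bar\tau)$, hence $\bar\tau$, on the dense open locus of $\A_3(4,8)$ where the relevant $\theta_n$ are nonzero, in particular on the plane-quartic (non-hyperelliptic) locus. Geometrically this matches the classical reconstruction of a quartic from its $28$ bitangents (\cite{CS}, \cite{lh}); here the bitangents come \emph{canonically labeled} by odd characteristics, and the scalings encoded in $V$ upgrade the reconstruction from $\A_3(2)$ to the finer space $\A_3(4,8)$.

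The hard part will be making the inversion uniform, and it is twofold. First, on the vanishing loci the logarithm is unavailable, and one must argue directly that the pattern of vanishing minors together with the ratios of the nonvanishing ones still determines $[\dots:\theta_n:\dots]$ — equivalently, that the monomial map $\P^{35}\dashrightarrow\P^{2015}$ cut out by the sets $S(m_1,m_2,m_3)$ is injective everywhere, not merely generically. Second, one must handle the hyperelliptic locus, where the configuration degenerates: the $28$ lines become the $\binom{8}{2}$ lines joining the $8$ branch points on the image conic, and injectivity should follow by recovering those $8$ labeled points (or by a limiting argument from the quartic locus), while the decomposable locus, where the gradients drop rank and the map is undefined, is excluded. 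I expect the first point to be the genuine obstacle, and that it is dispatched exactly by the careful use of Jacobi's formula and Riemann's relations announced in the introduction; reducing everything to the known injectivity of $\P\mathrm{Th}$ is what makes the argument close.
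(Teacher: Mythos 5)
The paper itself offers no argument for this proposition: it is stated as ``implicitly proved'' in \cite{gsm}, and the machinery the paper assembles around it (Igusa's characterization of Jacobian Nullwerte and the injectivity of $\P \rm{Th}$) shows that the intended route is exactly yours -- express the azygetic Pl\"ucker coordinates as degree-$5$ monomials in the even theta constants, invert, and quote injectivity of $\P \rm{Th}$. So your strategy matches the cited source. However, one of your assertions is false: it is \emph{not} true that $D(m_1,m_2,m_3)$ vanishes identically exactly on the syzygetic triples. By the very proposition of Igusa quoted in Section 5 of the paper, a syzygetic triple is precisely one whose Jacobian Nullwerte is \emph{not} a polynomial in the theta constants; since the zero polynomial is a polynomial, syzygetic $D$'s cannot vanish identically. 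Geometrically, $D(m_1,m_2,m_3)(\tau)=0$ says that the three bitangent lines are concurrent, which is not implied by their six contact points lying on a conic and fails for a generic quartic. The identical-vanishing statement you are recalling is the one from \cite{PSV} about their $4\times 4$ matrix $V$ (whose construction involves the quartic itself), not about the $3\times 3$ matrix of gradients. This error is not fatal to the architecture -- azygetic triples are identified combinatorially from the characteristic labels, not from a vanishing pattern -- but your claim that ``the azygetic/syzygetic dichotomy is precisely the nonvanishing pattern of these monomials'' must be discarded.

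The genuine gap is in the inversion step. Knowing all azygetic minors up to one common scalar, you must recover $[\cdots:\theta_n:\cdots]\in\P^{35}$; for this it is not enough that the integer matrix of exponent-vector differences have rank $35$: the lattice those differences span must be \emph{saturated} in $\Z^{36}$. Otherwise the theta constants are recovered only up to a pattern of roots of unity $\varepsilon_n$ with $\prod_{n\in S}\varepsilon_n$ independent of the azygetic $5$-set $S$, and injectivity of $\P \rm{Th}$ no longer closes the argument. This is not a pedantic worry: the paper recalls that $\Gamma_3(2)/\Gamma_3(4,8)$ acts on theta constants exactly by such fourth-root-of-unity characters, so patterns of this type exist in nature and must be excluded by an explicit computation or by the ``careful use of Riemann's relations'' that you defer; your logarithm argument, being insensitive to torsion, cannot see this. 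Finally, the proposition is asserted on all of $\A_3(4,8)$, including hyperelliptic and decomposable classes where some even theta constants vanish and the monomial/logarithm argument is unavailable; your proposed remedies (a limiting argument, recovering the eight branch points) are sketches, not proofs. In summary: the reduction to injectivity of $\P \rm{Th}$ via Jacobi's derivative formula is the right and intended approach, but as written the proof contains one false claim and leaves the two steps that carry the actual difficulty unestablished.
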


%\smallskip
% \noindent {\bf Main theorem.} For $g\geq 3$ the map $\P grTh$ is injective on
% $\A_g^{4,8}$. 
 
In genus 3 case the evaluation at  zero of $\tt{m'}{m''}(\tau,z)$ and of the gradients have a  significative meaning.
In fact, as consequence of Riemann singularity theorem the following proposition holds.
 \begin{prop} Let $\tau$  be a jacobian matrix,  then it is the period matrix of a hyperelliptic jacobian if and only if there exist an even characteristic $m$ with  $\tt{m'}{m''}(\tau,0)=0$.
  
Let $\tau$  be the   period matrix of a non-hyperelliptic jacobian (i.e the jacobian of a plane quartic), then for all odd characteristics $m$  the gradient vector $\rm{grad}_z \tt{m'}{m''}(\tau,0)$  parametrizes the bitangents of the plane quartic.
\end{prop}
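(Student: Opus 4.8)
The plan is to treat the two statements separately, in each case using the Riemann singularity theorem to convert the vanishing of theta data at $z=0$ into the existence of sections of a theta characteristic, and then reading this off from the canonical model of the genus $3$ curve. Throughout I identify a reduced characteristic $m$ with the theta characteristic $\kappa_m$, a square root of $K_C$; in genus $3$ this is a line bundle of degree $g-1=2$, and $h^0(C,\kappa_m)$ has the same parity as the characteristic $m$.

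For the first statement I would argue as follows. Since evaluating $\theta_m$ at $z=0$ amounts (up to a nonzero exponential factor, by the half-period permutation formula) to evaluating $\theta_0$ at the half period associated with $m$, the Riemann singularity theorem gives $\theta_m(\tau,0)=0$ if and only if $h^0(C,\kappa_m)>0$. For an even characteristic $m$ the parity constraint then forces $h^0(C,\kappa_m)\ge 2$, and a degree-$2$ bundle with at least two sections is precisely a $g^1_2$; its existence is equivalent to $C$ being hyperelliptic. Conversely, a hyperelliptic genus $3$ curve satisfies $K_C=2\,g^1_2$, so the $g^1_2$ is itself an even theta characteristic (with $h^0=2$), and its theta constant vanishes. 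This yields the stated equivalence.

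For the second statement I would first pin down the vanishing order. In the non-hyperelliptic case Clifford's theorem bounds $h^0(C,\kappa_m)\le 1+\tfrac{1}{2}\deg\kappa_m=2$, so for an odd $m$ the parity constraint forces $h^0(C,\kappa_m)=1$. Hence $0$ is a point of the translated theta divisor $\Theta_m=\{\theta_m(\tau,\cdot)=0\}$ of multiplicity exactly one, i.e.\ a smooth point, and the unique effective divisor $D_m=p+q$ with $\mathcal{O}(D_m)=\kappa_m$ satisfies $2D_m\sim K_C$. Under the canonical embedding $C\hookrightarrow\P^2=\P(H^0(K_C)^*)$ this says that the line through $\phi_K(p)$ and $\phi_K(q)$ meets $C$ in $2D_m$, i.e.\ it is the bitangent $\ell_m$ with contact divisor $D_m$; so the $28$ odd characteristics correspond bijectively to the $28$ bitangents.

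It remains to identify the gradient with $\ell_m$, which is the crux. I would invoke Riemann's theorem realizing $\Theta_m$ as a translate of the image $W_{g-1}$ of $\mathrm{Sym}^{g-1}C$, together with the description of its Gauss map: at a smooth point coming from an effective $D_m=p+q$ with $h^0(\kappa_m)=1$, the Gauss map returns the point of $\P(H^0(K_C))$ representing the hyperplane spanned by $\phi_K(p)$ and $\phi_K(q)$, which for $g=3$ is exactly the line $\ell_m$ in the canonical plane. Since $\mathrm{grad}_z\theta_m(\tau,0)$ is, by definition, the covector in $(T_0J)^*\cong H^0(K_C)$ cutting out the tangent hyperplane to $\Theta_m$ at $0$, its projective class is precisely this Gauss image; equivalently, in canonical coordinates the bitangent is $\sum_i\bigl(\partial_{z_i}\theta_m(\tau,0)\bigr)\,\omega_i=0$. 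The main obstacle is exactly this last step: one has to compute the Gauss map of $W_{g-1}$ at a point carrying a one-dimensional series and verify that the normal direction extracted from the first-order behaviour of $\theta_m$ is the span of the contact points. Here I would use the explicit gradient formula (see \cite{fayriemann}), keeping careful track of the half-period translate relating $m$ to $\kappa_m$ and of the Riemann constant, the vanishing order $h^0(\kappa_m)=1$ being what guarantees a nonzero gradient and hence a well-defined bitangent.
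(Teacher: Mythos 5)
Your proposal is correct and takes essentially the same approach as the paper: the paper gives no argument at all beyond the phrase ``as consequence of Riemann singularity theorem,'' and your chain of reasoning --- the singularity theorem plus the parity of $h^0$ of theta characteristics for the hyperelliptic criterion, then Clifford's theorem and the Gauss-map description of the theta divisor to identify $\mathrm{grad}_z\,\theta_m(\tau,0)$ with the bitangent spanned by the contact divisor --- is exactly the standard fleshing-out of that citation. The step you single out as the crux (that the tangent hyperplane to the translated theta divisor at $0$ is the span of the contact points) is indeed the classical Riemann/Andreotti result, which is where the paper implicitly defers to the literature.
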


\begin{remark} The equations of the bitangents are
$$b_m(\tau, z):=\frac{\partial\tt{m'}{m''}(\tau,z)}{\partial z_1}\vert_{z=0}z_1+\frac{\partial\tt{m'}{m''}(\tau,z)}{\partial z_2}\vert_{z=0}z_2+\frac{\partial\tt{m'}{m''}(\tau,z)}{\partial z_3}\vert_{z=0}z_3=0.$$

From now on, if it will be necessary, we  will identify the gradient vectors with the bitangents
\end{remark}

The following corollary is easily derived.
\begin{corollary}
The hyperelliptic locus $\mathcal I_3 \subset \mathcal A_3$  is defined by the equation   
$$\prod_{m\, even}\tt{m'}{m''}(\tau,0)=0.$$
\end{corollary}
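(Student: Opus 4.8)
The plan is to read the asserted equation as a statement about vanishing loci and to reduce it immediately to the Proposition just proved. First I would observe that a finite product of holomorphic functions vanishes at a point exactly when at least one factor does; since in genus $3$ each of the $36$ even theta constants is a non-trivial holomorphic function on $\H_3$, the equation $\prod_{m\,{\rm even}}\tt{m'}{m''}(\tau,0)=0$ holds at a period matrix $\tau$ if and only if there exists an even characteristic $m$ with $\tt{m'}{m''}(\tau,0)=0$. This is precisely the condition appearing in the first half of the Proposition.

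Next I would invoke that Proposition directly: for a period matrix $\tau$ of a Jacobian, the existence of a vanishing even theta constant is equivalent to $\tau$ being the period matrix of a hyperelliptic Jacobian, that is $\tau\in\mathcal{I}_3$. Thus on the Jacobian locus the zero set of the product is exactly the hyperelliptic locus. To pass from Jacobians to all of $\mathcal{A}_3$ I would use the feature peculiar to genus $3$: since $\dim\mathcal{M}_3=\dim\mathcal{A}_3=6$, the Torelli map is dominant and every \emph{indecomposable} principally polarized abelian threefold is a Jacobian, so the Jacobian locus fills $\mathcal{A}_3$ away from the proper closed subset of decomposable abelian varieties.

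I would then check that the statement is well posed on $\mathcal{A}_3=\H_3/\Gamma_3$. Although each even theta constant is only a modular form for $\Gamma_3(4,8)$, the group $\Gamma_3$ permutes the even characteristics and acts on the theta constants by multipliers that are nowhere vanishing; hence $\prod_{m\,{\rm even}}\tt{m'}{m''}$ transforms under $\Gamma_3$ by $\det(c\tau+d)^{18}$ times a character, so it is a weight $18$ modular form with a multiplier system and its zero set is $\Gamma_3$-invariant and descends to a well-defined subvariety of $\mathcal{A}_3$. This is the classical weight $18$ form whose vanishing is known to cut out the hyperelliptic locus.

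The step I expect to require the most care is precisely this passage from the Jacobian locus to all of $\mathcal{A}_3$: a priori the zero set of the product in $\mathcal{A}_3$ also contains the decomposable locus, where even theta constants necessarily vanish (the theta divisor of a product being reducible, so that an even characteristic splitting into two odd ones kills the constant) but whose points are not Jacobians at all. This locus has dimension at most $4$, strictly below the dimension $5$ of $\mathcal{I}_3$, so it is negligible at the level of divisors and is excluded as soon as one restricts to indecomposable ppavs; making this explicit, or simply reading the corollary on the Jacobian locus, is the only genuine subtlety in what is otherwise an immediate consequence of the Proposition.
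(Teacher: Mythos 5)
Your proof is correct and follows essentially the same route as the paper, which offers no written argument at all beyond declaring the corollary ``easily derived'' from the preceding Proposition (hyperelliptic Jacobian $\Leftrightarrow$ some even theta constant vanishes). Your extra care---verifying that the zero set is $\Gamma_3$-invariant so the equation makes sense on $\mathcal{A}_3$, and flagging that the decomposable locus (dimension $4$, where even characteristics splitting into two odd ones force vanishing) also lies in the zero set, so that the equation strictly speaking cuts out the closure of $\mathcal{I}_3$---addresses genuine points of looseness that the paper silently glosses over.
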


\section{The bitangent matrix} \label{bitangmatrix}  
This section will be entirely devoted to the description of the bitangent matrix related to the plane quartic. First of all, we will see how the language introduced in the previous section suitably translates the combinatorics described in Section \ref{aronhold} so as to let us build the matrix. Then, we will recall the properties of such a matrix and how it completely describes the geometry of the curve.

Using the language of Section \ref{thetafunct}, the geometric condition defining an Aronhold system can be rephrased as a combinatorial condition, as $\theta_i+\theta_j- \theta_k$
  is an even theta characteristic whenever
  $m_i+m_j+m_k$ is an even characteristic. So we can apply what we stated in Remark \ref{E:matr}  to the $8\times 8$ symmetric matrix
%\begin{equation} \label{lxz}
%L_X(z) := \texttt{X}\ A(z)\ ^t\texttt{X}
%\end{equation}
$L_X(z)$, cf. \eqref{E:hesse2}, since it is related to  the matrix obtained using the derivatives of odd theta functions ordered as in Remark \ref{E:matr}.  More precisely, using the notations in Remark \ref{E:matr}
we set
$$
M_X(z):=\begin{pmatrix} 
0 &b_{n_1}(\tau,z) &\dots &b_{n_7}(\tau,z)\\
b_{n_1}(\tau,z)  & 0  &\dots &b_{n_7+m_0+n_1}(\tau,z)\\
\dots & \dots &0&\dots \\
b_{n_7}(\tau,z)&b_{n_1+m_0+n_7}(\tau,z) & \dots &0\\
\end{pmatrix},
$$
and
$$
M_i:=
\begin{pmatrix}
0 & \frac{\partial}{\partial z_i}\theta_{n_1}(\tau,z)|_{z=0} & \cdots & \frac{\partial}{\partial z_i}\theta_{n_7}(\tau,z)|_{z=0} \\
\frac{\partial}{\partial z_i}\theta_{n_1}(\tau,z)|_{z=0} & 0 & \cdots & \frac{\partial}{\partial z_i}\theta_{n_7+m_0+n1}(\tau,z)|_{z=0} \\
\cdots & \cdots & 0 & \cdots \\
\frac{\partial}{\partial z_i}\theta_{n_7}(\tau,z)|_{z=0} & \frac{\partial}{\partial z_i}\theta_{n_1+m_0+n_7}(\tau,z)|_{z=0} & \cdots & 0 \\
\end{pmatrix}.
$$
Thus the matrices
$$
M_X(z) = z_1 M_1+ z_2 M_2+ z_3 M_3
$$
and $L_X(z) $ are related. In both cases the entries of the matrices  $M_X(z)$  and  $L_X(z) $  are the bitangents to the canonical curves, but  they are not uniquely determined, (entries can differ by  different proportionality factors) so the matrix  $M_X(z)$ has not necessarily rank 4.  
%Moreover I am not sure which can be considered the  " canonical matrix" of bitangents when we use theta %functions.

%In fact  there is another candidate . This is the equation of a bitangent
%$$\lambda_1\frac{\p}{\p z_1}\theta_{n_1}(\tau,z)|_{z=0}+\lambda_2\frac{\p}{\p z_2}\theta_{n_1}(\tau,z)|%_{z=0}+\lambda_3\frac{\p}{\p z_3}\theta_{n_1}(\tau,z)|_{z=0}=0$$
%but we could use also the expression

%$$\lambda_1\frac{\p}{\p z_1}\theta_{0}(\tau,\tau \frac{n_1'}{2}+ \frac{n_1''}{2})+\lambda_2\frac{\p}{\p %z_2}\theta_{0}(\tau,\tau \frac{n_1'}{2}+ \frac{n_1''}{2})+\lambda_3\frac{\p}{\p z_3}\theta_{0}(\tau,\tau %\frac{n_1'}{2}+ \frac{n_1''}{2})=0$$

%They are equations of the same line, in fact  they differ by a factor
%$$exp(\pi i \left[ \left( \frac{n_1'}{2},\tau\frac{n_1'}{2}\right)+2\left(\frac{n_1'}{2}, %\frac{n_1''}%{2}\right)\right])$$
%In the first expression we considered the evaluation at zero of  the gradient of  an odd theta function , in the %second   the evaluation at the corresponding point of order two of the gradient of Riemann theta function ( %characteristic m=0).
 
%So, even if we have the same bitangents,  we can get  rather different matrices. It is not obvious that they %have the same rank

%{\bf QUESTIONS} Which is the correct matrix ?   Does the condition rank=4 suffice?
%\begin{remark} In both cases we can consider the action of the  symplectic group for obtaining all possible %matrices.
%This should lead to some general transformation formula, i.e. some  nice vector valued modular form %\end{remark}\bigskip

We will manipulate the matrix $M_X(z)$ and determine suitable coefficients $c_{ij}$ in order that the matrix $(c_{ij}M_X(z)_{ij})$ has rank four.
%We need to renormalize the  coefficients of the matrix. 
We will do it using the action of the symplectic group, Riemann theta formula and Jacobi's derivative  formula. We  recall shortly them in the  genus 3 case.

%The equation of a bitangent $b_m$ can be obtained as the 
%We set $g_z:=\rm{grad}_z$, so that 
%$$b_m(\tau, z)= (z_1, z_2, z_3) g_z\,
% \theta_{m}(\tau,0).
% $$
For any triple  $n_1, n_2, n_3$ of odd characteristics we   set
$$D(n_1, n_2,n_3)(\tau):=\rm{grad}_z\,
 \theta_{n_1}(\tau,0)\wedge \rm{grad}_z\,
 \theta_{n_2}(\tau,0)\wedge \rm{grad}_z\,
 \theta_{n_3}(\tau,0).
 $$
We  recall when  such  nullwerte of jacobian of theta functions is a polynomial in the  theta constants. The following statement holds,  see \cite{IgNu}.
\begin{prop}  Suppose that $g=3$ and   $n_1, n_2, n_3$ are odd  characteristics distinct  mod 2; then 
$D(n_1, n_2, n_3)$ is  a polynomial in the theta constants   if and only if  $n_1, n_2,n_3$  form an azygetic triplet. Moreover, we have
$$
D(n_1, n_2, n_3)(\tau) =\pm  (\pi)^3\theta_{m_1}\theta_{m_2}\theta_{m_3}\theta_{m_4}\theta_{m_5}(\tau),
$$
with  $m_1,\dots, m_5$ even characteristics  and $ n_1, n_2, n_3, m_1,\dots, m_5$ a uniquely determined fundamental system of characteristics.
\end{prop}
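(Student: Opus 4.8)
\noindent The plan is to read the asserted identity as an equality of two scalar modular forms of the same weight, to pin down the right-hand side through the combinatorics of fundamental systems together with the automorphy under the symplectic group, and finally to fix the identity and the constant by restricting to the locus of decomposable period matrices, where the classical genus-one Jacobi derivative formula applies. The first thing I would record are the weights. Each $\mathrm{grad}_z\,\theta_{n_i}(\tau,0)$ is a $\C^3$-valued modular form for $\Gamma_3(4,8)$ under $\rho(M)=(\det M)^{1/2}M$; taking $\Lambda^3$ of this representation, the three scalar factors contribute $(\det M)^{3/2}$ while the induced action on $\Lambda^3\C^3\cong\C$ contributes $\det M$, so $D(n_1,n_2,n_3)$ is a scalar modular form of weight $5/2$ for $\Gamma_3(4,8)$. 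A product of five even theta constants $\theta_{m_1}\cdots\theta_{m_5}$, each of weight $1/2$, also has weight $5/2$. Hence any expression of $D$ as a polynomial in theta constants must be a combination of degree-five monomials, and the claimed formula is an identity between two weight-$5/2$ forms.

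Next I would settle the combinatorial content and the necessity of the azygetic hypothesis. By the definition of a fundamental system, an azygetic triple $n_1,n_2,n_3$ of odd characteristics is exactly one that completes to a fundamental system $n_1,n_2,n_3,m_1,\dots,m_5$; since in genus three a fundamental system carries $3$ or $7$ odd members, the three odd $n_i$ force the remaining five $m_j$ to be even and to be uniquely determined by the triple. This both produces the even characteristics of the statement and controls the automorphy: under $\Gamma_3/\Gamma_3(4,8)$ the determinant $D(n_1,n_2,n_3)$ transforms, up to the factor $\det(c\tau+d)^{5/2}$, by the permutation $n_i\mapsto\gamma\cdot n_i$ and a character coming from $\phi$, while $\theta_{m_1}\cdots\theta_{m_5}$ transforms by the corresponding permutation of the $m_j$ and the product of the theta-constant characters. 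These two automorphy factors agree precisely when $n_1,n_2,n_3,m_1,\dots,m_5$ is a fundamental system. For a syzygetic triple no such completion exists, so no degree-five monomial in theta constants can carry the same transformation behaviour as $D$, which yields the ``only if'' direction.

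For the ``if'' direction and the constant I would restrict to the diagonal locus $\tau=\mathrm{diag}(\tau_1,\tau_2,\tau_3)$, where the theta function factors as a product of three genus-one thetas. There $\partial_{z_j}\theta_{n}(\tau,0)$ is supported on the single index $j$ at which the $j$-th genus-one component of $n$ is odd, and it vanishes identically when all three components are odd; hence $D$ is nonzero on the diagonal only when the odd positions of $n_1,n_2,n_3$ form a permutation of $\{1,2,3\}$, in which case the $3\times 3$ determinant reduces, up to a sign, to the product of its three nonzero entries. Applying the classical Jacobi formula, which expresses the derivative of an odd genus-one theta at the origin as $\pm\pi$ times the product of the three even genus-one theta constants, turns every surviving entry into a product of even genus-one theta constants, and the azygetic completion guarantees that these reassemble into the restriction of $\theta_{m_1}\cdots\theta_{m_5}$, producing the overall constant $\pm\pi^3$. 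This verifies the formula on the diagonal and fixes the scalar.

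The main obstacle will be the passage from the diagonal locus to all of $\H_3$: both sides are weight-$5/2$ forms for $\Gamma_3(4,8)$ that agree on the reducible locus, and I must argue that their difference vanishes identically rather than merely along the diagonal. I expect to handle this not by further computation but through Igusa's description of the graded ring of scalar modular forms for $\Gamma_3(4,8)$ together with the injectivity of $\P\mathrm{Th}$, which between them preclude a weight-$5/2$ cusp-form discrepancy supported off the diagonal. Controlling this possible discrepancy — equivalently, showing that the diagonal locus is not contained in the divisor of the difference — is the genuinely delicate step, and it is also where the necessity argument above must be upgraded from a matching of automorphy characters to an actual non-representability statement for the syzygetic case.
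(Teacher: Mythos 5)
The paper does not prove this proposition at all: it is quoted verbatim from Igusa's work on Nullwerte of Jacobians of odd theta functions (the reference [Igu81] in the bibliography), so there is no internal argument to compare yours against; the relevant question is whether your sketch would stand on its own, and it would not. Your weight count (both sides are weight-$5/2$ forms for $\Gamma_3(4,8)$), the combinatorial completion of an azygetic odd triple to a fundamental system with five even characteristics, and the computation on the split locus $\tau=\mathrm{diag}(\tau_1,\tau_2,\tau_3)$ via the genus-one Jacobi formula are all sound. The fatal step is the one you yourself flag: the diagonal locus has dimension $3$ inside the $6$-dimensional $\H_3$, so agreement of two weight-$5/2$ forms there says nothing about their difference, which merely has to lie in the (large) ideal of forms vanishing on that locus. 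Your proposed remedy --- invoking ``Igusa's description of the graded ring'' and injectivity of $\P\mathrm{Th}$ to ``preclude a discrepancy supported off the diagonal'' --- is not an argument; injectivity of the theta map is a statement about points of $\A_3(4,8)$, not about divisors of modular forms, and it cannot force a form vanishing on a codimension-$3$ subvariety to vanish identically. What the actual proofs do at this point is quite different: one shows (via the multiplier system of theta and the structure of the ring of modular forms with character) that the space of weight-$5/2$ forms transforming with the precise character of $D(n_1,n_2,n_3)$ is \emph{one-dimensional}, after which proportionality is automatic and a single degeneration fixes the constant. That one-dimensionality is the real content, and your sketch never supplies it.

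Two further gaps are worth naming. First, the diagonal restriction cannot even fix the constant for every azygetic triple: if some $n_i$ has all three genus-one components odd (e.g.\ $[{}^{111}_{111}]$), its gradient row vanishes identically on the diagonal, so both sides restrict to $0$ and the comparison is vacuous; you must first use the $\Gamma_3$-action to move to a triple whose odd positions form a permutation, which in turn requires tracking how the formula (including the eighth roots of unity in the multiplier $\phi$) transports along the orbit. Second, your ``only if'' direction is a heuristic: you assert that no degree-five monomial can share the automorphy character of $D$ for a syzygetic triple, but you never compute either character. The character decomposition argument can indeed be made to work (polynomials of weight $5/2$ in theta constants decompose into character eigenspaces under $\Gamma_3(2)/\Gamma_3(4,8)$, so a mismatch for every monomial does preclude representability), but establishing the mismatch requires the explicit theta transformation multipliers; this computation is precisely the substance of the cited Igusa paper and cannot be waved through.
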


%We end this section by briefly recalling 
Also, we briefly recall that Riemann's quartic addition theorem for theta constants with characteristic in genus three has the form
\begin{equation*}\label{Riemann}
  r_1=r_2+r_3,
\end{equation*}
where each $r_i$ is a product of four theta constants with characteristics forming an even coset of a two-dimensional isotropic space. Such isotropic spaces are constructed by means of the symplectic form on $\F_2^6$  defined  by
$$
e(m, n):=(-1)^{m'^tn''-m''^tn'}.
$$

%Next  
%we recall the symplectic form on $\F_2^6$  defined  by
%$$
%  e(m, n):=(-1)^{m'^tn''-m''^tn'}.
%$$
%and
%Riemann's quartic addition theorem for theta constants. Indeed, in genus three  (see for example \cite{rafabook}, \cite{smrel} or \cite{vgvdg})   Riemann's quartic addition theorem for theta constants with characteristics has the form
%\begin{equation}\label{Riemann}
%  r_1=r_2+r_3,
%\end{equation}
%where each $r_i$ is a product of four theta constants with characteristics forming an even coset of a two-dimensional isotropic space.

%There is another interesting curve
%associated to the net $\Lambda_X$, namely the curve $\Gamma\subset
%\P^3$ consisting of the vertices of the singular quadrics of
%$\Lambda_X$. If this is the case, then 
%there exists an even
% theta-characteristic $\theta$ on $C$ such that the morphism $f: C \to \P^3$
%  defined by the complete linear series $|K_C+\theta|$
%associates to a point $p \in C$ the vertex $v(Q_p)$ of the quadric
%$Q_p$.   In particular $\Gamma = f(C)$ is nonsingular of degree
%$6$ and is isomorphic to $C$.   

%\medskip

%Let again consider  the  matrix
%\begin{equation}\label{E:hesse2}
%L_X(z) := \texttt{X}\ A(z)\ ^t\texttt{X}
%\end{equation}
%
%  and the
%$8\times 8$ symmetric matrix:
%
%  $L_X(z)$, we have

A full description of the curve C is provided by the matrix $L_X(z)$ defined in \eqref{E:hesse2}, as the following notable proposition states.
\begin{prop}\label{P:sturm1}
Let $L_X(z)$ be the matrix defined in \eqref{E:hesse2}, then:
\begin{itemize}
\item[(i)] given the net $\Lambda_X$ and a basis $\{Q_1,Q_2,Q_3\}$
of it, the matrix $L_X(z)$ is uniquely defined up to
simultaneous multiplication by a constant factor of a row and the
corresponding column and up to simultaneous permutations of rows
and columns;

\item[(ii)] the 28 entries of $L_X(z)$ outside the main
diagonal are linear forms in $z$ that define the bitangents
of $C$;

\item[(iii)] the seven bitangents on a given row (column) are
elements of an Aronhold system. The 8 Aronhold systems represented
by the rows (columns) of $L_X(z)$ are associated to the even
theta characteristic on $C$ defined by the net $\Lambda_X$;

\item[(iv)] $L_X(z)$ has identically rank 4, and \emph{any}
of its $4\times 4$ minors is a polynomial of degree 4 in $z$
which defines $C$.

\end{itemize}
\end{prop}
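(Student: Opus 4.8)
The plan is to treat (i), (ii) and (iv) as essentially linear-algebra statements attached to the factorization $L_X(z)=\texttt{X}\,A(z)\,{}^t\texttt{X}$, and to reserve the genuine theta-characteristic geometry for (iii). The single observation underlying everything is that each octad point lies on \emph{every} quadric of the net: since $x_i\in Q_1\cap Q_2\cap Q_3$ we have $x_i^{t}A_k x_i=0$ for $k=1,2,3$, hence $x_i^{t}A(z)x_i\equiv 0$, which accounts for the vanishing diagonal, while $(L_X(z))_{ij}=x_i^{t}A(z)x_j=\sum_k z_k\,(x_i^{t}A_k x_j)$ is visibly a linear form in $z$. For (i) I would merely track the ambiguities in the data. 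Fixing the basis $\{Q_1,Q_2,Q_3\}$ fixes $A(z)$ only up to a coordinate change $U\in\mathrm{GL}(W)$, under which $A(z)\mapsto{}^tU\,A(z)\,U$ while the rows of $\texttt{X}$, being coordinate vectors of points, transform contragrediently; a direct substitution shows $L_X(z)$ is unchanged, so the coordinates on $W$ are immaterial. The only remaining freedoms are the rescaling $x_i\mapsto\lambda_i x_i$ of the homogeneous coordinates of each point, which multiplies the $i$-th row and column by $\lambda_i$, and the ordering of the eight points, which permutes rows and columns simultaneously; this is exactly the stated ambiguity.

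Assertion (ii) is where I would carry out the one real computation. Fix $i\ne j$ and choose coordinates on $W$ with $x_i=e_1$, $x_j=e_2$, so that $(L_X(z))_{ij}=A(z)_{12}$. The linear form $m:=A(z)_{12}$ is not identically zero, for otherwise $L_{ij}\subset Q(z)$ for all $z$ and hence $L_{ij}\subset X$, impossible for a line; thus $\ell_{ij}:=\{m=0\}$ is a genuine line in the net $\P^2$. Since $x_i,x_j\in Q(z)$ already forces $A(z)_{11}=A(z)_{22}=0$, on $\ell_{ij}$ the entire upper-left $2\times2$ block of $A(z)$ vanishes, so there $A(z)=\left(\begin{smallmatrix}0&B\\ {}^tB&D\end{smallmatrix}\right)$ with $B,D$ of size $2\times2$ and $B$ having entries linear in the parameter of $\ell_{ij}\cong\P^1$. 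The Schur-complement identity gives $\det A(z)|_{\ell_{ij}}=(\det B)^2$, the square of a binary quadratic form, so the quartic $\det A(z)$ meets $\ell_{ij}$ in a divisor $2p+2q$; that is, $\ell_{ij}$ is tangent to $C$ at two points. Letting $\{i,j\}$ range over the $\binom82=28$ pairs produces $28$ bitangents, and, the octad being in general position, they are distinct and hence exhaust the bitangents of $C$; this also establishes \eqref{bitangent}.

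For (iv) I would exploit the multiplicativity of the determinant on square blocks. Because the eight octad points span $\P^3$, the matrix $\texttt{X}$ has rank $4$; since $\texttt{X}$ is then injective and ${}^t\texttt{X}$ surjective, $\operatorname{rank}L_X(z)=\operatorname{rank}A(z)$ pointwise, which equals $4$ off $C$, so $L_X(z)$ has generic rank $4$. For any index sets $S,T$ with $|S|=|T|=4$, the corresponding $4\times4$ submatrix is $\texttt{X}_S\,A(z)\,{}^t\texttt{X}_T$, whence the minor equals $(\det\texttt{X}_S)(\det\texttt{X}_T)\,\det A(z)$. In general position no four octad points are coplanar, so both scalar factors are nonzero and every $4\times4$ minor is a nonzero constant multiple of $\det A(z)$, the quartic defining $C$.

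The remaining assertion (iii) requires genuine theta-characteristic geometry, and I expect it to be the main obstacle. Here I would invoke Hesse's theorem that $\det A(z)=0$ is one of the $36$ determinantal representations, thereby singling out an even theta characteristic $m_0$ of $C$. Relative to $m_0$ one has the classical dictionary identifying the $28$ odd theta characteristics (equivalently the $28$ bitangents) with the $\binom82$ pairs of octad points, under which the bitangent carried by $L_{ij}$ corresponds to the odd characteristic in position $(i,j)$ of the symmetric array of Remark \ref{E:matr}; equivalently, projecting $\P^3$ from $x_i$ realizes $C$ on the degree-$2$ del Pezzo surface obtained by blowing up the seven image points, so that the seven lines $L_{ij}$ $(j\ne i)$ become an Aronhold system. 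Either route shows that each row of $L_X(z)$ is an Aronhold system and that all eight of them are azygetic to, hence associated with, the single even characteristic $m_0$ determined by the net. The delicate points are exactly the compatibility of the pair-labelling with the azygetic condition $e(m_a,m_b,m_c)=-1$ and the verification that the common even characteristic is the one attached to the chosen representation; these I would settle by matching the octad combinatorics against the fundamental-system array of Remark \ref{E:matr}, citing \cite{Do} for the classical correspondence.
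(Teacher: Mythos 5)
Your proposal is correct, but it is far more self-contained than the paper's own proof, which for the substantive items is essentially a proof by citation: the paper settles (ii) by referring to \cite{PSV} (Section 1, eq.\ (1.3)), settles (iii) by appealing to the discussion at the start of the section together with the combinatorial array of Remark \ref{E:matr} and the projection-from-$x_8$ construction recalled in the introduction, and settles (iv) with the single remark that it ``obviously follows by the way the matrix was defined''. You instead reconstruct the underlying arguments: the base-point identity $x_i^{t}A(z)x_i\equiv 0$, the block computation $\det A(z)|_{\ell_{ij}}=(\det B)^2$ showing that each off-diagonal entry cuts out a line meeting $C$ in a doubled divisor (this is in substance the argument behind eq.\ \eqref{bitangent} in \cite{PSV}), and the factorization of an arbitrary $4\times 4$ minor as $\det(\texttt{X}_S)\,\det(\texttt{X}_T)\,\det A(z)$, which makes (iv) transparent. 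Your treatment of (i) is also slightly more careful than the paper's, since you check in addition that a coordinate change $U\in{\rm GL}(W)$ leaves $L_X(z)$ invariant (the rows of $\texttt{X}$ transforming contragrediently to $A(z)$), an invariance the paper uses tacitly. For (iii) you and the paper lean on the same classical dictionary (\cite{Do}, \cite{Do2}), so there the two routes coincide; what your extra work buys elsewhere is a proof of (ii) and (iv) that needs no external reference beyond Hesse's theorem.

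One point needs tightening: twice you appeal to the octad being ``in general position'' --- for the distinctness of the $28$ lines in (ii) and for $\det\texttt{X}_S\neq 0$ in (iv) --- whereas the proposition is asserted for \emph{every} regular net, not merely a generic one. Both facts are consequences of regularity, not genericity. If four octad points were coplanar then, the octad being a complete intersection of quadrics and hence self-linked, the residual four points would also be coplanar, and the union of the two planes would be a quadric of rank $2$ in $\Lambda_X$, contradicting regularity; this gives $\det\texttt{X}_S\neq 0$ for every $4$-element subset $S$. Distinctness of the $28$ bitangents likewise should not be left to genericity: it follows from the bijection in (iii) between the $28$ pairs $\{i,j\}$ and the $28$ odd theta characteristics (distinct odd characteristics define distinct bitangents), or alternatively from the smoothness of the Hesse curve of a regular net as in \cite{PSV}. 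With these substitutions your argument establishes the proposition in the stated generality.
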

\begin{proof}
${}$
\begin{itemize}
\item[(i)]
%The matrix depends on the ordering of the points
%$x_1,\dots,x_8$ and on the choice of their homogeneous
%coordinates.
A change of the order of the eight points of the Cayley octad $x_1,\ldots,x_8$corresponds to a simultaneous permutation of rows and columns for the matrix. A change of the homogeneous coordinates of these points corresponds to a simultaneous multiplication by a constant factor. Hence, the statement follows.
\item[(ii)]
%We refer to \cite{PSV}.
We refer to \cite{PSV} for details (cf. Section 1, eq. (1.3)).
\item[(iii)]
%It is an immediate consequence of  the  previous  discussion.
The statement is a consequence of what we explained at the beginning of this section.
\item[(iv)]
%This last point is obvious.
The claim obviously follows by the way the matrix was defined.
\end{itemize}
\end{proof}

%%%%%%%%%%%%%%%%%%%%%%%%%%%%%%%%%%%%%%%%
%%%%%%%%%%%%%%%%%%%%%%%%%%%%%%%%%%%%%%%%
\section{Determining  analytically the  bitangent matrix}
Now we want to determine analytically the bitangent matrix; this will be a partial converse of the Proposition \ref{P:sturm1}. \smallskip

\noindent Our initial  datum  will be  the $28$  gradients of  odd theta functions evaluated at $0$, corresponding to the bitangents and a chosen even characteristics $m$. 
Therefore, to obtain a matrix congruent to $L_X(z)$,
 we have to determine the  values of the  functions
$c_{n_i }(\tau)$ in the  matrix:
$$
\begin{pmatrix}
%\left[\begin{array}{ccccc}
  0&c_{n_1}(\tau)
 b_{n_1}(\tau, z)&c_{n_2}(\tau)
 b_{n_2}(\tau, z)&\dots&c_{n_7}(\tau)
 b_{n_7}(\tau, z)\\
c_{n_1}(\tau)
 b_{n_1}(\tau, z)&0&c_{m+n_1+ n_2}(\tau)
 b_{m+n_1+n_2}(\tau, z)& \dots& c_{m+n_1+n_7}(\tau)
 b_{m+n_1+n_7}(\tau, z)\\
c_{n_2}(\tau)
 b_{n_2}(\tau, z)& c_{m+n_1+n_2}(\tau)
 b_{m+n_1+n_2}(\tau, z)&0& \dots&c_{m+n_2+n_7}(\tau)
 b_{m+n_2+n_7}(\tau, z)\\
\dots & \dots &\dots & \dots&\dots \\
 c_{n_7}(\tau)
 b_{n_7}(\tau, z)& c_{m+n_1+n_7}(\tau)
 b_{m+n_1+n_7}(\tau, z)&\dots& \dots& 0\\
\end{pmatrix}.
%    \end{array}\right].
$$

\ni There are, up to  permutations, 8 Aronhold sets whose sum is the given characteristic.  The subgroup  of the  symplectic group  fixing  the characteristic $m$,  permutes the element of a fixed  Aronhold set and the  eight Aronhold sets. 
% Giving an Aronhold set and plogging in it  the even   characteristics in a 
%certain position  we have a string of  8  characteristics  completely determining  a matrix as in \ref{mat}.\smallskip
Now  we choose an  even  characteristic  and  an  Aronhold set. As we wrote, the  number of possibilities is exactly  equal to $\vert Sp(6, \F_2)\vert$. 
To  simplify our  computation we     assume $m=0$ and  as  Aronhold set  we use the one in Remark\,\ref{E:matr}.

We  want  to  obtain a matrix of rank  four.  Because of the action of the   symplectic  group, we can assume that    all columns are linear combination of the first  four. This  will be our starting hypothesis. We recall that any  set of  four bitangents coming from an  Aronhold set  form a  fundamental system of $\P^2$.
We need seven  bitangents $b_1,\dots b_7$  forming an Aronhold set. We  choose those in   Section \ref{aronhold}.  
Let $\mathcal{M}$ be the symmetric matrix:
\bes 
\label{m1}
\mathcal{M}:=
\begin{pmatrix}
0 & b_{77} & b_{64} & b_{51} & b_{46} & b_{23} & b_{15} & b_{32} \\
b_{77} & 0 & b_{13} & b_{26} & b_{31} & b_{54} & b_{62} & b_{45} \\
b_{64} & b_{13} & 0 & b_{35} & b_{22} & b_{47} & b_{71} & b_{56} \\
b_{51} & b_{26} & b_{35} & 0 & b_{17} & b_{72} & b_{44} & b_{63} \\
b_{46} & b_{31} & b_{22} & b_{17} & 0 & b_{65} & b_{53} & b_{74} \\
b_{23} & b_{54} & b_{47} & b_{72} & b_{65} & 0 & b_{36} & b_{11} \\
b_{15} & b_{62} & b_{71} & b_{44} & b_{53} & b_{36} & 0 & b_{27} \\
b_{32} & b_{45} & b_{56} & b_{63} & b_{74} & b_{11} & b_{27} & 0 \\
\end{pmatrix},
\ees
where $b_{ij}:= \sum_{k=1}^{3}(\frac{\partial}{\partial z_k}\theta_{ij}|_{z=0})z_k$, $\theta_{ij}$ being the theta function associated with the odd characteristic  $[{}^{i}_{j}]:=[{}^{a_1 \, a_2 \, a_3}_{b_1 \, b_2 \,\, b_3}]$, where $i=a_1 2^2+a_2 2+a_3$ and $j=b_1 2^2+b_2 2+b_3$. Each equation $b_{ij}=0$ defines a bitangent. Clearly these bitangents do not change by multiplying each entry of the matrix by a function which does not depend on the variables $z_k$, with $k=1,2,3$. 
%In particular, the multiplicative coefficients for the $28$ different entries of $\mathcal{M}$ can be chosen in %such a way that the rank of the resulting matrix is equal to $4$ (REF.).\\
\smallskip
\\
To find out what these $28$ coefficients are, we will resort to the following procedure. We will first 
determine the coefficients of suitable $5 \times 5$ principal minors so as to get symmetric matrices of rank $4$; this will define some relations among the column vectors of the matrix. Then we will act on these minors properly so as to make their common entries equal.  Finally we will use the resulting relations among the column vectors to determine the remaining coefficients of the matrix.\\
\\
We first focus on the submatrix obtained by taking the first five rows and the first five columns. We need to compute $\lambda_{ij}$ such that:
\be
\label{A5}
\rm{rank}\begin{pmatrix}
0 & \lambda_{77}b_{77} & \lambda_{64}b_{64} & \lambda_{51}b_{51} & \lambda_{46}b_{46} \\
\lambda_{77}b_{77} & 0 & \lambda_{13}b_{13} & \lambda_{26}b_{26} & \lambda_{31}b_{31} \\
\lambda_{64}b_{64} &\lambda_{13} b_{13} & 0 & \lambda_{35}b_{35} & \lambda_{22}b_{22} \\
\lambda_{51}b_{51} & \lambda_{26}b_{26} & \lambda_{35}b_{35} & 0 & \lambda_{17}b_{17} \\
\lambda_{46}b_{46} & \lambda_{31}b_{31} & \lambda_{22}b_{22} & \lambda_{17}b_{17} & 0 \\
\end{pmatrix}=4.
\ee
\\
\ni Note that $\rm{rk}(D^tAD)=\rm{rk}(A)$ for any invertible diagonal matrix $D$. Therefore, the matrix in (\ref{A5}) is determined up to an invertible diagonal matrix which acts by congruence. \\
\\
\ni The condition of linear dependence on the vector columns $V_i$ of the matrix in (\ref{A5}):
\bes
\alpha_1V_1+\alpha_2V_2+\alpha_3V_3+\alpha_4V_4+\alpha_5V_5=0
\ees
\ni can be turned into:
\be
\label{ldcondition}
\tilde{V}_1+\tilde{V}_2+\tilde{V}_3+\tilde{V}_4-\tilde{V}_5=0
\ee
\ni when both the sides of the matrix are multiplied by the diagonal matrix $\rm{diag}\left( 
\alpha_1^{-1},\alpha_2^{-1},\alpha_3^{-1}, \alpha_4^{-1}, -\alpha_5^{-1} 
\right)$.
%\bes
%\begin{pmatrix}
%\frac{1}{\alpha_1} &  &  &  & \\
%& \frac{1}{\alpha_2}  & &  & \\
%& & \frac{1}{\alpha_3}  &  &  \\
%& & & \frac{1}{\alpha_4}  & \\
%&  &  & & -\frac{1}{\alpha_5}  \\
%\end{pmatrix}
%\ees 
Hence, we can compute the coefficients $\lambda_{ij}$ by demanding the condition (\ref{ldcondition}) without any loss of generality. Note that whenever such an operation is performed again on the matrix, the diagonal matrix on the left will change the coefficients in (\ref{ldcondition}).\\
On the first row (\ref{ldcondition}) leads to:
\bes
\lambda_{77}b_{77} + \lambda_{64}b_{64} + \lambda_{51}b_{51} = \lambda_{46}b_{46},
\ees
which is equivalent to a linear system of three equations in the variables $\lambda_{77}$, $\lambda_{64}$, $\lambda_{51}$, $\lambda_{46}$:
\be
\label{system}
\left \{ \begin{array}{l}
\lambda_{77}\partial_1\theta_{77}|_{z=0} + \lambda_{64}\partial_1\theta_{64}|_{z=0} + \lambda_{51}\partial_1\theta_{51}|_{z=0} = \lambda_{46}\partial_1\theta_{46}|_{z=0} \\
\lambda_{77}\partial_2\theta_{77}|_{z=0} + \lambda_{64}\partial_2\theta_{64}|_{z=0} + \lambda_{51}\partial_2\theta_{51}|_{z=0} = \lambda_{46}\partial_2\theta_{46}|_{z=0}\\
\lambda_{77}\partial_3\theta_{77}|_{z=0} + \lambda_{64}\partial_3\theta_{64}|_{z=0} + \lambda_{51}\partial_3\theta_{51}|_{z=0} = \lambda_{46}\partial_3\theta_{46}|_{z=0} \\
\end{array}
\right. \quad \quad \forall \tau \in \mathcal{H}_3,
\ee
where $\partial_k\theta_{ij}|_{z=0} :=\frac{\partial}{\partial z_k}\theta_{ij}|_{z=0} $, with $k=1,2,3$.
The solution of (\ref{system}) can be determined up to a constant:
\bes
\begin{array}{cccc}
\lambda_{77}=D(46, 64, 51),
 & \lambda_{64} =D(77, 64, 46), 
&\lambda_{51} = D(77, 46, 51),
&\lambda_{46} =D(77, 64, 51). 
\end{array}
\ees\\
Here, as in Section \ref{bitangmatrix}, $D(l,m,n):=\rm{det}\frac{\partial (\theta_l, \theta_m, \theta_n)}{\partial z_1\partial z_2 \partial z_3}$.
\ni By repeating this procedure on each row we get the matrix:\\
\bes
M=
\begin{pmatrix}
0 & D(46,64,51)b_{77} & D(77,46,51)b_{64} & D(77,64,46)b_{51} & D(77,64,51)b_{46} \\
D(31,13,26)b_{77} & 0 & D(77,31,26)b_{13} & D(77,13,31)b_{26} & D(77,13,26)b_{31} \\
D(22,13,35)b_{64} & D(64,22,35)b_{13} & 0 & D(64,13,22)b_{35} & D(64,13,35)b_{22} \\
D(17,26,35)b_{51} & D(51,17,35)b_{26} & D(51,26,17)b_{35} & 0 & D(51,26,35)b_{17} \\
D(17,31,22) b_{46} & D(46,17,22)b_{31} & D(46,31,17)b_{22} & D(46,31,22)b_{17} & 0 \\
\end{pmatrix}.
\ees
\\
 Although this matrix is not symmetric, it can be turned into a symmetric one by multiplying it on the left by a suitable diagonal matrix (note that this operation does not change the rank). If we choose the matrix $D_1$:
\bes
D_1:=
\rm{diag}
\left(
1,
\frac{D(46,64,51)}{D(31,13,26)},
\frac{D(77,46,51)}{D(22,13,35)},
\frac{D(77,64,46)}{D(17,26,35)},
\frac{D(77,64,51)}{D(17,31,22)}
\right),
\ees
%\bes
%D_1:=
%\begin{pmatrix}
%1 &  &  &  &  \\
% & \frac{D(46,64,51)}{D(31,13,26)} &  &  &  \\
% & & \frac{D(77,46,51)}{D(22,13,35)} & &  \\
%& &  & \frac{D(77,64,46)}{D(17,26,35)} & \\
% & &  & & \frac{D(77,64,51)}{D(17,31,22)} \\
%\end{pmatrix}
%\ees
we set $S'_1:=D_1M$ and  we get:\\
\\
\resizebox*{1\textwidth}{!}{
$
S'_1=
\begin{pmatrix}
0 & D(46,64,51)b_{77} & D(77,46,51)b_{64} & D(77,64,46)b_{51} & D(77,64,51)b_{46} \\
D(46,64,51)b_{77} & 0 & \frac{D(46,64,51)D(77,31,26)}{D(31,13,26)}b_{13} & \frac{D(46,64,51)D(77,13,31)}{D(31,13,26)}b_{26} & \frac{D(46,64,51)D(77,13,26)}{D(31,13,26)}b_{31} \\
D(77,46,51)b_{64} &\frac{D(77,46,51)D(64,22,35)}{D(22,13,35)} b_{13} & 0 & \frac{D(77,46,51)D(64,13,22)}{D(22,13,35)}b_{35} & \frac{D(77,46,51)D(64,13,35)}{D(22,13,35)}b_{22} \\
D(77,64,46)b_{51} & \frac{D(77,64,46)D(51,17,35)}{D(17,26,35)}b_{26} & \frac{D(77,64,46)D(51,26,17)}{D(17,26,35)}b_{35} & 0 & \frac{D(77,64,46)D(51,26,35)}{D(17,26,35)}b_{17} \\
D(77,64,51) b_{46} & \frac{D(77,64,51)D(46,17,22)}{D(17,31,22)}b_{31} & \frac{D(77,64,51)D(46,31,17)}{D(17,31,22)}b_{22} & \frac{D(77,64,51)D(46,31,22)}{D(17,31,22)}b_{17} & 0 \\
\end{pmatrix}.
$
}
\\
\\
\\
Thanks to the relations among the determinants induced by Jacobi's derivative formula \cite{ig80}, the matrix $S'_1$ is easily seen to be symmetric. Note that a different diagonal matrix $D_i$ can be chosen for this operation in such a way that the matrix $S'_i:=D_iM$ and the matrix $M$ have the same entries on the $i$-th row. A straightforward computation proves that $D_iD_1^{-1}=c_iId$ with a suitable $c_i$, hence $S'_i=c_iS'_1$.\\
\\
We can get a more convenient form for $S'_1$ acting by congruence with   the diagonal matrix:
\bes
T_1:=\rm{diag}
\left(
1,
\frac{D(31,13,26)}{D(46,64,51)D(77,31,26)},
\frac{D(22,13,35)}{D(77,46,51)},
1,
1
\right).
\ees
%\bes
%T_1:=
%\begin{pmatrix}
%1 &  &  &  &  \\
% & \frac{D(31,13,26)}{D(46,64,51)D(77,31,26)} &  &  &  \\
% & & \frac{D(22,13,35)}{D(77,46,51)} & &  \\
%& &  & 1 & \\
% & &  & & 1 \\
%\end{pmatrix}
%\ees
\ni Then we have:\\
\\
\resizebox*{1\textwidth}{!}{
$
S_1:=T_1S'_1T_1=
\begin{pmatrix}
0 & \frac{D(31,13,26)}{D(77, 31,26)}b_{77} & D(22,13,35)b_{64} & D(77,64,46)b_{51} & D(77,64,51)b_{46} \\
\frac{D(31,13,26)}{D(77, 31,26)}b_{77} & 0 & \frac{D(22,13,35)}{D(77, 46, 51)}b_{13} & \frac{D(77, 13, 31)}{D(77, 31, 26)}b_{26} & \frac{D(77,13,26)}{D(77, 31, 26)}b_{31} \\
D(22,13,35)b_{64} &\frac{D(22,13,35)}{D(77, 46, 51)} b_{13} & 0 & D(64, 13, 22)b_{35} & D(64, 13, 35)b_{22} \\
D(77,64,46)b_{51} & \frac{D(77, 13, 31)}{D(77, 31, 26)}b_{26} & D(64, 13, 22)b_{35} & 0 & \frac{D(77,64,46)D(51,26,35)}{D(17,26,35)}b_{17} \\
D(77,64,51) b_{46} &  \frac{D(77,13,26)}{D(77, 31, 26)}b_{31} & D(64, 13, 35)b_{22} & \frac{D(77,64,46)D(51,26,35)}{D(17,26,35)}b_{17} & 0 \\
\end{pmatrix}.
$
}\\
\\
\\
Likewise, the whole procedure can be repeated for the submatrices of $\mathcal{M}$ obtained by replacing the fifth column and row respectively with the sixth, the seventh and the eighth. Then we get the following symmetric matrices of rank $4$:
\\
\\
\resizebox*{1\textwidth}{!}{
$
S_2:=
\begin{pmatrix}
0 &\frac{D(54,13,26)}{D(77, 54,26)}b_{77} & D(47,13,35)b_{64} & D(77,64,23)b_{51} & D(77,64,51)b_{23} \\
\frac{D(54,13,26)}{D(77, 54,26)}b_{77} & 0 & \frac{D(47,13,35)}{D(77, 23,51)}b_{13} & \frac{D(77,13,54)}{D(77,54,26)}b_{26} & \frac{D(77,13,26)}{D(77,54, 26)}b_{54} \\
 D(47,13,35)b_{64} & \frac{D(47,13,35)}{D(77, 23,51)} b_{13} & 0 & D(64,13, 47)b_{35} &  D(64,13, 35)b_{47} \\
D(77,64,23)b_{51} & \frac{D(77,13,54)}{D(54,13, 26)}b_{26} & D(64,13, 47)b_{35} & 0 & \frac{D(77,64,23)D(51,26,35)}{D(72,26,35)}b_{72} \\
D(77,64,51)b_{23} & \frac{D(77,13,26)}{D(54,13, 26)}b_{54} & D(64,13, 35)b_{47} & \frac{D(77,64,23)D(51,26,35)}{D(72,26,35)}b_{72} & 0 \\
\end{pmatrix},
$
}
\\
\\
\resizebox*{1\textwidth}{!}{
$
S_3:=
\begin{pmatrix}
0 & \frac{D(62,13,26)}{D(77,62,26)}b_{77} & D(71,13,35)b_{64} & D(77,64,15)b_{51} & D(77,64,51)b_{15} \\
\frac{D(62,13,26)}{D(77,62,26)}b_{77} & 0 & \frac{D(71,13,35)}{D(77,15,51)}b_{13} & \frac{D(77,13,62)}{D(77,62,26)}b_{26} & \frac{D(77,13,26)}{D(77,62,26)}b_{62} \\
D(71,13,35)b_{64} & \frac{D(71,13,35)}{D(77,15,51)}b_{13} & 0 & D(64,13,71)b_{35} & D(64,13,35)b_{71} \\
D(77,64,15b_{51} & \frac{D(77,13,62)}{D(77,62,26)}b_{26} & D(64,13,71)b_{35} & 0 & \frac{D(77,64,15)D(51,26,35)}{D(44,26,35)}b_{44} \\
D(77,64,51) b_{15} & \frac{D(77,13,26)}{D(77,62,26)}b_{62} & D(64,13,35)b_{71} & \frac{D(77,64,15)D(51,26,35)}{D(44,26,35)}b_{44} & 0 \\
\end{pmatrix},
$
}
\\
\\
\resizebox*{1\textwidth}{!}{
$
S_4:=
\begin{pmatrix}
0 & \frac{D(45,13,26)}{D(77, 45,26)}b_{77} & D(56,13,35)b_{64} & D(77,64,32)b_{51} & D(77,64,51)b_{32} \\
\frac{D(45,13,26)}{D(77, 45,26)}b_{77} & 0 & \frac{D(56,13,35)}{D(77, 32, 51)}b_{13} & \frac{D(77, 13, 45)}{D(77, 45, 26)}b_{26} & \frac{D(77,13,26)}{D(77, 45, 26)}b_{45} \\
D(56,13,35)b_{64} & \frac{D(56,13,35)}{D(77, 32, 51)}b_{13} & 0 & D(64, 13, 56)b_{35} & D(64, 13, 35)b_{56} \\
 D(77,64,32)b_{51} & \frac{D(77, 13, 45)}{D(77, 45, 26)}b_{26} & D(64, 13, 56)b_{35} & 0 & \frac{D(77,64,32)D(51,26,35)}{D(63,26,35)}b_{63} \\
D(77,64,51) b_{46} &  \frac{D(77,13,26)}{D(77, 45, 26)}b_{45} & D(64, 13, 35)b_{56} & \frac{D(77,64,32)D(51,26,35)}{D(63,26,35)}b_{63} & 0 \\
\end{pmatrix}.
$
}\\
\\
\\
We can  act by congruence on  $S_2$, $S_3$ and $S_4$ using the diagonal matrices: 
\bes
\begin{array}{ccc}
N_2:=
\rm{diag}
\left(
\sqrt{A},
\sqrt{A}\frac{D(77,23,51)}{D(77,46,51)},
\frac{1}{\sqrt{A}}\frac{D(22,13,35)}{D(47,13,35)},
\frac{1}{\sqrt{A}}\frac{D(77,64,46)}{D(77,64,23)},
\frac{1}{\sqrt{A}}
\right), & & A := \frac{D(77,46,51) D(31, 13, 26) D(77,54,26)}{D(77,31,26) D(77, 23, 51) D(54,13,26)},\\
& & \\
N_3:=
\rm{diag}
\left(
\sqrt{B},
\sqrt{B}\frac{D(77,15,51)}{D(77,46,51)},
\frac{1}{\sqrt{B}}\frac{D(22,13,35)}{D(71,13,35)},
\frac{1}{\sqrt{B}}\frac{D(77,64,46)}{D(77,64,15)},
\frac{1}{\sqrt{B}}
\right), & & B := \frac{D(77,46,51) D(77, 62, 26) D(31,13,26)}{D(62,13,26) D(77, 15, 51) D(77,31,26)},\\
& & \\
N_4:=
\rm{diag}
\left(
\sqrt{C},
\sqrt{C}\frac{D(77,32,51)}{D(77,46,51)},
\frac{1}{\sqrt{C}}\frac{D(22,13,35)}{D(56,13,35)},
\frac{1}{\sqrt{C}}\frac{D(77,64,46)}{D(77,64,32)},
\frac{1}{\sqrt{C}} \right), & & C = \frac{D(77,46,51) D(77, 45, 26) D(31,13,26)}{D(45,13,26) D(77, 32, 51) D(77,31,26)}.
\\
\end{array}
\ees

%%%%%%
%\bes
%\begin{array}{cc}
%N_2:=
%\begin{pmatrix}
%\sqrt{A} & 0 & 0 & 0 & 0 \\
%0 & \sqrt{A}\frac{D(77,23,51)}{D(77,46,51)} & 0 & 0 & 0 \\
%0 & 0 & \frac{1}{\sqrt{A}}\frac{D(22,13,35)}{D(47,13,35)} & 0 & 0 \\
%0 & 0 & 0 & \frac{1}{\sqrt{A}}\frac{D(77,64,46)}{D(77,64,23)} & 0 \\
%0 & 0 & 0 & 0 & \frac{1}{\sqrt{A}} \\
%\end{pmatrix}
%& \text{where} \quad A = \frac{D(77,46,51) D(31, 13, 26) D(77,54,26)}{D(77,31,26) D(77, 23, 51) D(54,13,26)} \\
%N_3:=
%\begin{pmatrix}
%\sqrt{B} & 0 & 0 & 0 & 0 \\
%0 & \sqrt{B}\frac{D(77,15,51)}{D(77,46,51)} & 0 & 0 & 0 \\
%0 & 0 & \frac{1}{\sqrt{B}}\frac{D(22,13,35)}{D(71,13,35)} & 0 & 0 \\
%0 & 0 & 0 & \frac{1}{\sqrt{B}}\frac{D(77,64,46)}{D(77,64,15)} & 0 \\
%0 & 0 & 0 & 0 & \frac{1}{\sqrt{B}} \\
%\end{pmatrix}
%& \text{where} \quad B = \frac{D(77,46,51) D(77, 62, 26) D(31,13,26)}{D(62,13,26) D(77, 15, 51) D(77,31,26)}\\
%N_4:=
%\begin{pmatrix}
%\sqrt{C} & 0 & 0 & 0 & 0 \\
%0 & \sqrt{C}\frac{D(77,32,51)}{D(77,46,51)} & 0 & 0 & 0 \\
%0 & 0 & \frac{1}{\sqrt{C}}\frac{D(22,13,35)}{D(56,13,35)} & 0 & 0 \\
%0 & 0 & 0 & \frac{1}{\sqrt{C}}\frac{D(77,64,46)}{D(77,64,32)} & 0 \\
%0 & 0 & 0 & 0 & \frac{1}{\sqrt{C}} \\
%\end{pmatrix}
%& \text{where} \quad C = \frac{D(77,46,51) D(77, 45, 26) D(31,13,26)}{D(45,13,26) D(77, 32, 51) D(77,31,26)}
%\end{array}
%\ees
\ni Then $S_1$, $N_2S_2N_2$, $N_3S_3N_3$ and $N_4S_4N_4$ have the same entries on the common rows and columns; hence, we have  the following $8 \times 8$ symmetric matrix: \\
\\
\resizebox*{1\textwidth}{!}{
$
%\left(
\begin{pmatrix}
0 & \frac{D(31,13,26)}{D(77,31,26)}b_{77} & D(22,13,35)b_{64} & D(77,64,46)b_{51} & D(77,64,51)b_{46} & D(77,64,51)b_{23} & D(77,64,51)b_{15} & D(77,64,51)b_{32} \\
& & & & \\
* & 0 & \frac{D(22,13,35)}{D(77,46,51)}b_{13} & \frac{D(77,13,31)}{D(77,31,26)}b_{26} & \frac{D(77,13,26)}{D(77,31,26)}b_{31} & \frac{1}{A}\frac{D(31,13,26)D(77,13,26)}{D(54,13,26)D(77,31,26)}b_{54} & \frac{1}{B}\frac{D(31,13,26)D(77,13,26)}{D(62,13,26)D(77,31,26)}b_{62} & \frac{1}{C}\frac{D(31,13,26)D(77,13,26)}{D(45,13,26)D(77,31,26)}b_{45} \\
& & & & \\
*&* & 0 & D(64,13,22)b_{35} & D(64,13,35)b_{22} & \frac{1}{A}\frac{D(22,13,35)D(64,13,35)}{D(47,13,35)}b_{47} & \frac{1}{B}\frac{D(22,13,35)D(64,13,35)}{D(71,13,35)}b_{71} & \frac{1}{C}\frac{D(22,13,35)D(64,13,35)}{D(56,13,35)}b_{56} \\
& & & & \\
*& * &* & 0 & \frac{D(77,64,46)D(51,26,35)}{D(17,26,35)}b_{17} & \frac{1}{A}\frac{D(77,64,46)D(51,26,35)}{D(72,26,35)}b_{72} & \frac{1}{B}\frac{D(77,64,46)D(51,26,35)}{D(44,26,35)}b_{44} & \frac{1}{C}\frac{D(77,64,46)D(51,26,35)}{D(63,26,35)}b_{63} \\
& & & & \\
* & *& * & * & 0 & X_{65}b_{65} & X_{53}b_{53} & X_{74}b_{74} \\
& & & & \\
* & * & * & * & * & 0 & X_{36}b_{36} & X_{11}b_{11} \\
& & & & \\
* & * &* & * & * & * & 0 & X_{27}b_{27} \\
& & & & \\
* & * & * & * & * & * & *& 0 \\
\end{pmatrix},
$
}\\
\\
\\
\ni where the $X_{ij}$ are to be determined in such a way that the rank of the matrix is equal to $4$. For this purpose we note that we have determined the minors $S_1$, $N_2S_2N_2$, $N_3S_3N_3$ and $N_4S_4N_4$ by demanding precise relations among the eight vector columns $V_i$ of the $8 \times 8$ matrix $\mathcal{M}$. If we set:\\
\bes
\begin{array}{ccccc}
& c_2:=\frac{D(46, 64, 51) D(77, 31, 26)}{D(31, 13, 26)}; & c_3:=\frac{D(77, 46, 51)}{D(22, 13, 35)}; & & \\
\\
d_1:=\frac{1}{\sqrt{A}}; \ & d_2:=\frac{1}{\sqrt{A}}\frac{D(23, 64, 51) D(77, 54, 26) D(77, 46, 51)}{D(54, 13, 26)D(77, 23, 51)}; & d_3:=\sqrt{A}\frac{D(22, 13, 35)}{D(77, 23, 51)}; & d_4:=\sqrt{A}\frac{D(77, 64, 23)}{D(77, 64, 46)}; &  d_6:=\sqrt{A};\\
\\
e_1:=\frac{1}{\sqrt{B}}; \ & e_2:=\frac{1}{\sqrt{B}}\frac{D(15, 64, 51) D(77, 62, 26) D(77, 46, 51)}{D(62, 13, 26)D(77, 15, 51)}; & e_3:=\sqrt{B}\frac{D(77, 15, 51)}{D(22, 13, 35)}; & e_4:=\sqrt{B}\frac{D(77, 64,15)}{D(77, 64, 46)}; &  e_7:=\sqrt{B};\\
\\
f_1:=\frac{1}{\sqrt{C}}; \ & f_2:=\frac{1}{\sqrt{C}}\frac{D(32, 64, 51) D(77, 45, 26) D(77, 46, 51)}{D(45, 13, 26)D(77, 32, 51)}; & f_3:=\sqrt{C}\frac{D(77, 32, 51)}{D(22, 13, 35)}; & f_4:=\sqrt{C}\frac{D(77, 64,32)}{D(77, 64, 46)}; &  f_8:=\sqrt{C};\\
\end{array}
\ees
\\
\\
\ni then the following relations hold:
%\bes
%\begin{array}{l}
%V_1+c_2V_2+c_3V_3+V_4-V_5=0\\
%d_1V_1+d_2V_2+d_3V_3+d_4V_4-d_6V_6=0\\
%e_1V_1+e_2V_2+e_3V_3+e_4V_4-e_7V_7=0\\
%f_1V_1+f_2V_2+f_3V_3+f_4V_4-f_8V_8=0
%\end{array}
%\ees
\begin{align*}
V_1+c_2V_2+c_3V_3+V_4-V_5&=0,\\
d_1V_1+d_2V_2+d_3V_3+d_4V_4-d_6V_6&=0,\\
e_1V_1+e_2V_2+e_3V_3+e_4V_4-e_7V_7&=0,\\
f_1V_1+f_2V_2+f_3V_3+f_4V_4-f_8V_8&=0,
\end{align*}
\ni each respectively on the rows and the columns of the corresponding $5\times5$ minor.\\
\ni In particular, the following relation holds on the first four rows:
\bes
\label{r3}
\left (c_2 -\frac{d_2}{d_1}\right )V_2+\\
+\left (c_3 -\frac{d_3}{d_1}\right )V_3+\left (1- \frac{d_4}{d_1}\right )V_4-V_5+\frac{d_6}{d_1}V_6=0.
\ees
\\Hence, it can be used to compute $X_{65}$, by demanding it on the fifth row:\\
$$
X_{65}^{(5)}= \frac{1}{A}\left (A\cdot D(77,23,51) - D(77,46,51) \right )\frac{D(22,31,17)D(64,13,35)}{D(65,31,17)D(22,13,35)},
\\
$$
Otherwise we can compute $X_{65}$ by demanding the relation on the sixth row, we get:
\bes
X_{65}^{(6)}= \left (\frac{1}{A} - \frac{D(77,64,23)}{D(77,64,46)} \right )\frac{D(77,64,46)D(51,26,35)D(72,54,47)}{D(72,26,35)D(65,54,47)}.
\ees
\ni Writing the two formulas in terms of theta constants, we have
\begin{align*}
X_{65}^{(5)}&=\pm\theta_{14}\theta_{33}\left ( \frac{\theta_{00}\theta_{42}\theta_{57}\theta_{61}\theta_{70}}{\theta_{52}\theta_{75}}\right )\left ( \frac{\theta_{02}\theta_{03} \theta_{24}\theta_{25} }{\theta_{41}\theta_{40}\theta_{66}\theta_{67}} - 1\right ),\\
X_{65}^{(6)}&=\pm\theta_{06}\theta_{21}\left ( \frac{\theta_{00}\theta_{42}\theta_{57}\theta_{61}\theta_{70}}{\theta_{40}\theta_{67}} \right )\left (1 - \frac{\theta_{03} \theta_{10}\theta_{24}\theta_{37}}{\theta_{52}\theta_{41}\theta_{66}\theta_{75}}\right).
\end{align*}
By virtue of the Riemann relations in genus $3$:
\bes
\begin{array}{ccc}
\theta_{52}\theta_{75}\theta_{41 }\theta_{66}-\theta_{03}\theta_{10}\theta_{24 }\theta_{37}=
\theta_{14}\theta_{07}\theta_{33 }\theta_{20}, & \quad \quad & \theta_{40}\theta_{67}\theta_{41 }\theta_{66}-\theta_{03}\theta_{02}\theta_{24 }\theta_{25}=
\theta_{06}\theta_{21}\theta_{07 }\theta_{20}
\end{array}
\ees

\ni the two expressions for $X_{65}$ turn out to be equal:\\
$$
X_{65}^{(5)}=\theta_{14}\theta_{33}\left ( \frac{\theta_{00}\theta_{42}\theta_{57}\theta_{61}\theta_{70}}{\theta_{52}\theta_{75}}\right ) \frac{\theta_{06}\theta_{21} \theta_{07}\theta_{20} }{\theta_{41}\theta_{40}\theta_{66}\theta_{67}}  = \theta_{06}\theta_{21}\left ( \frac{\theta_{00}\theta_{42}\theta_{57}\theta_{61}\theta_{70}}{\theta_{40}\theta_{67}} \right ) \frac{\theta_{14} \theta_{07}\theta_{33}\theta_{20}}{\theta_{52}\theta_{41}\theta_{66}\theta_{75}} = X_{65}^{(6)}.
$$
\\
\ni Likewise we get:

\bes
\begin{array}{l}
X_{53}= \frac{1}{B}\left (B D(77,15,51) - D(77,46,51) \right )\frac{D(22,31,17)D(64,13,35)}{D(53,31,17)D(22,13,35)};\\
X_{74}= \frac{1}{C}\left (1- C\frac{D(77, 64, 32)}{D(77, 64, 46)} \right )\frac{D(77,64,51)D(46,31,22)}{D(74,31,22)};\\
X_{36}= \frac{1}{B}\left ( 1-\frac{D(54, 13, 26) D(77, 23, 51)D(15,64,51)D(77,62,26)}{D(23,64,51)D(77,54,26))D(77,15,51)D(62,13,26)} \right )\frac{D(77,64,51)D(23,47,72)}{D(36,47,72)};\\
X_{11}=\left(\frac{1}{C}-\frac{D(77,64,32)}{AD(77,64,23)} \right)\frac{D(23,54,47)D(77,64,51)}{D(11,54,47)};\\
X_{27}=\left(\frac{1}{C}-\frac{D(77,64,32)}{BD(77,64,15)} \right)\frac{D(15,62,71)D(77,64,51)}{D(27,62,71)}.
\end{array}
\ees

\ni Hence, each entry of the $8\times8$ symmetric matrix with rank equal to $4$ is  uniquely   determined, up to  congruences   by diagonal  matrices. In particular, we will get a suitable form for the matrix we have determined, by multiplying it on both sides by the diagonal matrix $\rm{diag}
\left(
1,
D(77,31,26),
1,
1,
1,
1,
1,
1 
\right)$:
\\
\\
\resizebox*{1\textwidth}{!}{
$
%\left(
\begin{pmatrix}
0 & D(31,13,26)b_{77} & D(22,13,35)b_{64} & D(77,64,46)b_{51} & D(77,64,51)b_{46} & D(77,64,51)b_{23} & D(77,64,51)b_{15} & D(77,64,51)b_{32} \\
& & & & \\
* & 0 & \frac{D(22,13,35)D(77,31,26)}{D(77,46,51)}b_{13} & D(77,13,31)b_{26} & D(77,13,26)b_{31} & \frac{1}{A}\frac{D(31,13,26)D(77,13,26)}{D(54,13,26)}b_{54} & \frac{1}{B}\frac{D(31,13,26)D(77,13,26)}{D(62,13,26)}b_{62} & \frac{1}{C}\frac{D(31,13,26)D(77,13,26)}{D(45,13,26)}b_{45} \\
& & & & \\
*&* & 0 & D(64,13,22)b_{35} & D(64,13,35)b_{22} & \frac{1}{A}\frac{D(22,13,35)D(64,13,35)}{D(47,13,35)}b_{47} & \frac{1}{B}\frac{D(22,13,35)D(64,13,35)}{D(71,13,35)}b_{71} & \frac{1}{C}\frac{D(22,13,35)D(64,13,35)}{D(56,13,35)}b_{56} \\
& & & & \\
*& * &* & 0 & \frac{D(77,64,46)D(51,26,35)}{D(17,26,35)}b_{17} & \frac{1}{A}\frac{D(77,64,46)D(51,26,35)}{D(72,26,35)}b_{72} & \frac{1}{B}\frac{D(77,64,46)D(51,26,35)}{D(44,26,35)}b_{44} & \frac{1}{C}\frac{D(77,64,46)D(51,26,35)}{D(63,26,35)}b_{63} \\
& & & & \\
* & *& * & * & 0 & \frac{1}{A}\left (A\cdot D(77,23,51) - D(77,46,51) \right )\frac{D(22,31,17)D(64,13,35)}{D(65,31,17)D(22,13,35)}b_{65} &\frac{1}{B}\left (B D(77,15,51) - D(77,46,51) \right )\frac{D(22,31,17)D(64,13,35)}{D(53,31,17)D(22,13,35)}b_{53} &  \frac{1}{C}\left (1- C\frac{D(77, 64, 32)}{D(77, 64, 46)} \right )\frac{D(77,64,51)D(46,31,22)}{D(74,31,22)}b_{74} \\
& & & & \\
* & * & * & * & * & 0 &\frac{1}{B}\left ( 1-\frac{D(54, 13, 26) D(77, 23, 51)D(15,64,51)D(77,62,26)}{D(23,64,51)D(77,54,26))D(77,15,51)D(62,13,26)} \right )\frac{D(77,64,51)D(23,47,72)}{D(36,47,72)}b_{36} &\left(\frac{1}{C}-\frac{D(77,64,32)}{AD(77,64,23)} \right)\frac{D(23,54,47)D(77,64,51)}{D(11,54,47)}b_{11} \\
& & & & \\
* & * &* & * & * & * & 0 & \left(\frac{1}{C}-\frac{D(77,64,32)}{BD(77,64,15)} \right)\frac{D(15,62,71)D(77,64,51)}{D(27,62,71)}b_{27} \\
& & & & \\
* & * & * & * & * & * & *& 0 \\
\end{pmatrix}.
$
}
\\
\\
\ni Using the expression of the Jacobian determinant in terms of theta constants and the Riemann relations we get the matrix $L(\tau, z)$:\\
\\
 \resizebox*{1\textwidth}{!}{
$
%\left(
\label{Z3}
L(\tau, z)=\begin{pmatrix}
0 & D(31,13,26)b_{77} & D(22,13,35)b_{64} & D(77,64,46)b_{51} & D(77,64,51)b_{46} & D(77,64,51)b_{23} & D(77,64,51)b_{15} & D(77,64,51)b_{32} \\
& & & & \\
& & & & \\
* & 0 & \pm \frac{\theta_{60}}{\theta_{04}} D(77, 31, 26)b_{13} &D(77,13,31)b_{26} & D(77,13,26)b_{31} & \pm \left (\frac{\theta_{07} \theta_{10}\theta_{25}\theta_{61}\theta_{73}}{\theta_{04}\theta_{40}\theta_{52}\theta_{67}\theta_{75}}\right )D(77,23,32)  b_{54} & \pm \left (\frac{\theta_{07} \theta_{10}\theta_{25}\theta_{57}\theta_{73}}{\theta_{04}\theta_{43}\theta_{52}\theta_{67}\theta_{76}}  \right ) D(77,15,32) b_{62} & \pm \left ( \frac{\theta_{01}  \theta_{16} \theta_{34}\theta_{70}\theta_{73}}{\theta_{04}\theta_{40}\theta_{43}\theta_{75}\theta_{76}}\right )D(77, 23, 32)b_{45} \\
& & & & \\
*&* & 0 & D(64,13,22)b_{35} &  D(64,13,35)b_{22} & \pm \left (\frac{\theta_{03} \theta_{14} \theta_{25}\theta_{60}\theta_{61}}{\theta_{04}\theta_{40}\theta_{41}\theta_{66}\theta_{67}} \right ) D(64,23,32)b_{47} & \pm \left ( \frac{\theta_{03} \theta_{14}\theta_{25}\theta_{57}\theta_{60}}{\theta_{04}\theta_{41}\theta_{50}\theta_{67}\theta_{76}} \right ) D(64, 15, 32) b_{71} & \pm \left (\frac{ \theta_{05} \theta_{12} \theta_{34}\theta_{60}\theta_{70}}{\theta_{04}\theta_{40}\theta_{50}\theta_{66}\theta_{76}}\right ) D(64, 23, 32)b_{56} \\
& & & & \\
*& * &* & 0 &  \pm \left ( \frac{\theta_{04}}{\theta_{55}} \right ) D(51, 26, 35)b_{17} & \pm \left ( \frac{\theta_{03} \theta_{10} \theta_{21}\theta_{61}}{\theta_{41}\theta_{52}\theta_{66}\theta_{75}}\right ) D(51,23,32)b_{72} & \pm \left (\frac{\theta_{03} \theta_{10} \theta_{21}\theta_{57}}{\theta_{41}\theta_{43}\theta_{50}\theta_{52}} \right )D(51, 15, 32)b_{44} & \pm \left( \frac{ \theta_{01}  \theta_{12} \theta_{30}\theta_{70}}{\theta_{43}\theta_{50}\theta_{66}\theta_{75}}\right )D(51,23,32)b_{63} \\
& & & & \\
* & *& * & * & 0 & \pm \left (\frac{\theta_{06}\theta_{07}\theta_{14} \theta_{20}\theta_{21} \theta_{33}}{\theta_{40}\theta_{41}\theta_{52}\theta_{66}\theta_{67}\theta_{75}} \right ) D(22, 31, 17) b_{65} &\pm \left (\frac{\theta_{05}\theta_{07}\theta_{14}\theta_{16}\theta_{21}\theta_{30}}{\theta_{41}\theta_{43}\theta_{50}\theta_{52}\theta_{67}\theta_{76}} \right ) D(22, 31, 17) b_{53} &  \pm    \left ( \frac{ \theta_{05}\theta_{16} \theta_{20}\theta_{33}\theta_{52} }{\theta_{21}\theta_{40}\theta_{50}\theta_{66}\theta_{76}}\right )D(46,31,22)b_{74} \\
& & & & \\
* & * & * & * & * & 0 & \pm \left (  \frac{\theta_{03}\theta_{07}\theta_{10}
\theta_{14}\theta_{21} \theta_{25} \theta_{55}\theta_{60} \theta_{73}}{\theta_{40}\theta_{41}\theta_{43}\theta_{50}\theta_{52}\theta_{66}\theta_{67}
\theta_{75}\theta_{76} }\right)D(22, 31, 17)b_{36}  & \pm \left (\frac{\ \theta_{06}\theta_{24}\theta_{33} \theta_{42}\theta_{55}\theta_{57}\theta_{60} \theta_{61}\theta_{70} }{\theta_{40}\theta_{41}\theta_{43}\theta_{50}\theta_{52}\theta_{66}\theta_{67}\theta_{75}\theta_{76}}   \right ) D(77, 23, 32)b_{11} \\
& & & & \\
* & * &* & * & * & * & 0 & \pm \left (\frac{\theta_{01}\theta_{05}\theta_{16}\theta_{12}\theta_{30}\theta_{34} \theta_{55} \theta_{60}\theta_{73} }{\theta_{40}\theta_{41}\theta_{43}\theta_{50}\theta_{52}\theta_{66}\theta_{67}\theta_{75}\theta_{76}}\right ) D(22, 31, 17)b_{27}  \\
& & & & \\
* & * & * & * & * & * & *& 0 \\
\end{pmatrix}.
$
}
\\
\\
\\
\ni \begin{remark} Note that each coefficient can be written as a product of at most $8$ determinants over $7$ determinants, although there seems not to be any canonical choice for such an expression.\\
\ni Take, for instance, the coefficient $X_{65}$; the triples of even characteristics $\{(06), (07), (14)\}$ and $\{(40), (41), (52)\}$ extend to azygetic $5$-tuples by means of the same pair $\{(55), (70)\}$, and the triples $\{(20), (21), (33)\}$ and $\{(66), (67), (75)\}$ extend to azygetic $5$-tuples by means of the pair $\{(34), (70)\}$. Therefore, we can write:\\
\bes
X_{65} =\pm \frac{\theta_{55}\theta_{70}}{\theta_{55}\theta_{70}} \left ( \frac{\theta_{06}\theta_{07}\theta_{14} }{\theta_{40}\theta_{41}\theta_{52}}\right )\cdot \frac{\theta_{34}\theta_{70}}{\theta_{34}\theta_{70}}\left ( \frac{ \theta_{20}\theta_{21} \theta_{33}}{\theta_{66}\theta_{67}\theta_{75}}\right ) D(22, 31, 17) = \pm \frac{D(11, 53, 72)D(11,13,74)}{D(11,15,72)D(11,13,32)}D(22, 31, 17).
\ees
In a similar way, we can get such an expression for $X_{13}$ and  $X_{53}$
\begin{align*}
X_{13} &=\pm \frac{\theta_{60}}{\theta_{04} }   D(77, 31, 26) = \frac{D(22, 13, 35)} {D(77, 46, 51)} D(77, 31, 26), \\
X_{53} &=\pm \frac{\theta_{37}\theta_{61}}{\theta_{37}\theta_{61}} \left ( \frac{\theta_{05}\theta_{07}\theta_{21}}{\theta_{41}\theta_{43}\theta_{67}}\right )\cdot \frac{\theta_{37}\theta_{70}}{\theta_{37}\theta_{70}}\left ( \frac{\theta_{14}\theta_{16}\theta_{30}}{\theta_{50}\theta_{52}\theta_{76}}\right )D(22, 31, 17) = \pm \frac{D(26, 36, 65)D(26,27,74)}{D(23,26,36)D(26,27,32)}D(22, 31, 17).
\end{align*}
\ni and so on for each entry of the matrix.
\end{remark}

We  can  summarize the previous discussion in the following  way.
 \begin{theorem}\label{main}  Let   $\tau$  be the period matrix of  the jacobian of a smooth plane quartic. When an even  characteristic and a corresponding Aronhold set of characteristics (i.e. a level  2 structure) are fixed then, up to congruences, a
 unique matrix $L(\tau, z)$ of  rank four is determined 
  %we get a uniquely determined   matrix $L(\tau, z)$ of   rank four  
 in such a way that its entries are proportional to the  $28$ bitangents. The equation of the corresponding plane quartic is obtained   taking the determinant of any minor of degree  four of the  above matrix.
\end{theorem}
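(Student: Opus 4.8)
The plan is to read the detailed computation of this section as a constructive proof of both existence and uniqueness, and then to transport the geometric conclusions of Proposition~\ref{P:sturm1} across the resulting congruence. I would begin from the symmetric matrix $\mathcal{M}$, whose off-diagonal entries are the gradients $b_{ij}$ of the odd theta functions arranged exactly as the characteristics of Remark~\ref{E:matr}, so that every $3\times 3$ principal minor records an azygetic triple. Using the freedom afforded by the action of the group fixing the chosen even characteristic -- which permutes both the elements of a fixed Aronhold set and the eight Aronhold sets -- I adopt as working hypothesis that, after a suitable normalization, columns $5,\dots,8$ of the sought rank-four matrix lie in the span of the first four. The rank condition is then imposed minor by minor: on each $5\times 5$ principal minor built on the columns $\{1,2,3,4,k\}$, the requirement of rank four is a single linear dependence among the five columns, which, as in \eqref{A5}--\eqref{system}, solves uniquely up to one overall scalar into Jacobian nullwerte $D(\cdot,\cdot,\cdot)$.

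Running this over $k=5,6,7,8$ produces four rank-four symmetric blocks. The gluing step is where the diagonal congruences $T_1,N_2,N_3,N_4$ enter: since $\operatorname{rk}(D^{t}AD)=\operatorname{rk}(A)$ for any invertible diagonal $D$, rescaling rows and the corresponding columns preserves rank four while allowing me to force the four blocks to agree on their common entries. The symmetry of each rescaled block is itself a consequence of the relations among the $D(\cdot,\cdot,\cdot)$ coming from Jacobi's derivative formula \cite{ig80}. After gluing, every entry of the $8\times 8$ matrix is already determined except for the six corner entries $X_{65},X_{53},X_{74},X_{36},X_{11},X_{27}$ indexed by $\{5,6,7,8\}$. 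Each of these I would compute \emph{twice}, by demanding the rank-four relation along two different rows, obtaining two rational expressions in the nullwerte, as exhibited for $X_{65}^{(5)}$ and $X_{65}^{(6)}$.

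The heart of the argument, and the step I expect to be the genuine obstacle, is proving that the two expressions for each corner entry coincide. Pure multilinear algebra does not suffice here: the two forms differ by a ratio of products of theta constants, and after rewriting each nullwert as a product of five theta constants via Jacobi's formula, their equality is precisely a Riemann quartic addition relation in genus three, whose constituent monomials are products of four theta constants over an even coset of a two-dimensional isotropic subspace of $\F_2^6$. Verifying this compatibility for all six corner entries is what forces the combined use of Jacobi's derivative formula and Riemann's relations, and it is the only place where the smoothness of the quartic is used, through the non-vanishing of the even theta constants appearing in the denominators.

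Finally I would assemble the conclusions. Uniqueness holds only up to congruence by diagonal matrices because, once the ordering of Remark~\ref{E:matr} and the rank-four requirement are fixed, that simultaneous row/column rescaling is the sole remaining freedom, while every coefficient is otherwise forced; and since these coefficients are quotients of theta constants whose denominators vanish only on the hyperelliptic locus, they are modular functions holomorphic along the locus of smooth plane quartic period matrices. The matrix $L(\tau,z)$ so produced has entries proportional to the $28$ bitangents by construction, and -- being the unique rank-four symmetric matrix with those entries in that order -- is diagonally congruent to the geometric Hesse matrix $L_X(z)$ of \eqref{E:hesse2}. Proposition~\ref{P:sturm1}(iv) then gives rank four and shows that any $4\times 4$ minor is a degree-four form in $z$ cutting out $C$; as a diagonal congruence only multiplies each such minor by a nonzero factor, its zero locus is unchanged, and the quartic is recovered, as claimed.
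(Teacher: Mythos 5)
Your proposal is correct and follows essentially the same route as the paper: imposing the rank-four condition on the $5\times 5$ principal minors built on columns $\{1,2,3,4,k\}$, solving into Jacobian nullwerte, gluing by the diagonal congruences, computing each corner entry $X_{ij}$ twice along two rows, and reconciling the two expressions via Jacobi's derivative formula and Riemann's quartic relations is exactly the paper's constructive proof, with your closing appeal to Proposition~\ref{P:sturm1}(iv) merely making explicit what the paper leaves implicit. The only slight inaccuracy is the claim that smoothness enters \emph{only} at the Riemann-relation step: the non-vanishing of the even theta constants (equivalently of the azygetic nullwerte $D(\cdot,\cdot,\cdot)$) is already needed for every division performed in the matrices $D_1$, $T_1$, $N_2$, $N_3$, $N_4$ and in the constants $A$, $B$, $C$.
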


Hence  for determining the  matrix $A(z)$  we can   consider the minor obtained  taking the first 4 rows and columns of the  matrix  $L(\tau, z)$  divided  for a suitable  jacobian determinant, so  we  will get  modular functions as coefficients, as stated in the following corollary.
 \begin{corollary}  Let   $\tau$  be the period matrix of   the jacobian of a smooth plane quartic, then the  matrix $A(z)$ is  congruent to the following matrix:
\bes
\label{Z4}
Q(\tau, z)=\begin{pmatrix}
0 & \frac {D(31,13,26)}{D(77, 31, 26)}b_{77} & \frac {D(22,13,35)}{D(77, 31, 26)}b_{64} & \frac{D(77,64,46)}{D(77, 31, 26)}b_{51} \\
* & 0 &  \frac{D(22, 13, 35)} {D(77, 46, 51)}  b_{13} &\frac{D(77,13,31)}{D(77, 31, 26)}b_{26}  \\
*&* & 0 &\frac{ D(64,13,22)}{D(77, 31, 26)}b_{35} \\
*& * &* & 0 \\
\end{pmatrix}.
\ees
\\
\ni
Moreover  
$${\rm det}\, Q(\tau, z)=0,$$
 is an equation for the  plane quartic. 
\end{corollary}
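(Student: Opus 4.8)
The plan is to realize $Q(\tau,z)$ as the first principal $4\times 4$ block of the matrix $L(\tau,z)$ produced by Theorem \ref{main}, rescaled by the single Jacobian nullwert $D(77,31,26)$, and then to read off both assertions from the structure relation $L_X(z)=\texttt{X}\,A(z)\,{}^t\texttt{X}$ of \eqref{E:hesse2} together with Proposition \ref{P:sturm1}.

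First I would extract the submatrix $P(\tau,z)$ formed by the first four rows and columns of $L(\tau,z)$ and compare $D(77,31,26)^{-1}P(\tau,z)$ entry by entry with the displayed $Q(\tau,z)$. Reading off $L(\tau,z)$, the entries $(1,2),(1,3),(1,4),(2,4),(3,4)$ agree verbatim after division by $D(77,31,26)$. The only entry requiring an argument is $(2,3)$: there $L(\tau,z)$ carries the coefficient $\pm(\theta_{60}/\theta_{04})\,D(77,31,26)$, which by the identity $X_{13}=\pm\tfrac{\theta_{60}}{\theta_{04}}\,D(77,31,26)=\tfrac{D(22,13,35)}{D(77,46,51)}\,D(77,31,26)$ recorded in the preceding Remark equals $\tfrac{D(22,13,35)}{D(77,46,51)}\,D(77,31,26)$; dividing by $D(77,31,26)$ yields exactly $\tfrac{D(22,13,35)}{D(77,46,51)}b_{13}$, the displayed $(2,3)$ entry. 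This confirms $Q(\tau,z)=D(77,31,26)^{-1}P(\tau,z)$.

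Next I would establish the congruence $Q(\tau,z)\cong A(z)$. By Theorem \ref{main} the matrix $L(\tau,z)$ lies in the diagonal-congruence class of the bitangent matrix $L_X(z)$ of \eqref{E:hesse2}; writing this as $L(\tau,z)=\Delta\,L_X(z)\,\Delta$ with $\Delta=\operatorname{diag}(d_1,\dots,d_8)$, letting $\texttt{X}'$ be the $4\times 4$ block of the first four rows of $\texttt{X}$ and $\Delta'=\operatorname{diag}(d_1,\dots,d_4)$, the first principal block becomes
$$
P(\tau,z)=\Delta'\bigl(\texttt{X}'\,A(z)\,{}^t\texttt{X}'\bigr)\Delta'={}^tV\,A(z)\,V,\qquad V:={}^t\texttt{X}'\,\Delta'.
$$
Since the Cayley octad is regular its first four points span $\P^3$, so $\texttt{X}'\in{\rm GL}_4$ and hence $V\in{\rm GL}_4$. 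Working over $\C$, the nonzero scalar $D(77,31,26)$ admits a square root, so division by it is again a congruence; thus $Q(\tau,z)={}^tV'\,A(z)\,V'$ with $V':=D(77,31,26)^{-1/2}V\in{\rm GL}_4$, which is the asserted congruence.

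Finally, from $Q(\tau,z)={}^tV'\,A(z)\,V'$ we get $\det Q(\tau,z)=(\det V')^2\,\det A(z)=(\det V')^2 f(z)$, where $f(z)=\det A(z)$ is the chosen determinantal representation of the quartic $C$ and $\det V'$ is a nonzero function of $\tau$ alone; hence, as a polynomial in $z$, $\det Q(\tau,z)=0$ has the same zero locus as $f=0$, namely $C$, consistently with Proposition \ref{P:sturm1}(iv). The coefficients of $Q(\tau,z)$ are quotients of the weight-$\tfrac52$ modular forms $D(\cdot,\cdot,\cdot)$ --- Pl\"ucker coordinates of $\P\,{\rm grTh}$, equivalently $\pm\pi^3\theta_{m_1}\cdots\theta_{m_5}$ --- hence modular functions of weight $0$, holomorphic wherever $D(77,31,26)\neq 0$; by the corollary characterizing the hyperelliptic locus via $\prod_{m\ {\rm even}}\theta_m=0$, the relevant theta constants do not vanish along the period matrices of smooth plane quartics, so these functions are holomorphic there. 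I expect the main obstacle to be the $(2,3)$-entry bookkeeping: one must be certain that the single identity quoted from the Remark is precisely what converts the theta-constant form of $X_{13}$ into the Jacobian-determinant form displayed in $Q(\tau,z)$, and that no hidden rescaling intervenes in the passage from $P(\tau,z)$ to $Q(\tau,z)$; the congruence and determinant statements are then formal.
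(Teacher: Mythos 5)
Your proposal is correct and takes essentially the same route as the paper: the paper also obtains $Q(\tau,z)$ as the first $4\times 4$ principal minor of $L(\tau,z)$ divided by the Jacobian determinant $D(77,31,26)$, the $(2,3)$-entry being converted by the identity $X_{13}=\pm\tfrac{\theta_{60}}{\theta_{04}}D(77,31,26)=\tfrac{D(22,13,35)}{D(77,46,51)}D(77,31,26)$ from the preceding Remark, and the congruence to $A(z)$ together with $\det Q(\tau,z)=0$ defining the quartic rests on $L_X(z)=\texttt{X}\,A(z)\,{}^t\texttt{X}$ and Proposition \ref{P:sturm1}. You have simply made explicit the details (invertibility of the $4\times 4$ block of $\texttt{X}$, the square-root rescaling, the determinant computation) that the paper leaves implicit in its one-sentence justification.
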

 \ni A very similar equation for the plane quartic has been already obtained in \cite{gu}.\medskip
 This is a classical result that  goes  back  to Riemann; we refer to \cite{Do} for details.

%\ni \begin{remark} We expected  for the  matrix $L(\tau, z)$ an expression  similar to that  of  $Q(\tau, z)$.
% Unfortunately this is not the case. We have the opinion that this  fact  depends on the initial  choice that we have done of the first four   columns as  basis. Nevertheless operating  with diagonal matrix we tried to get a better expression, but  we were unsuccessful. 
%\end{remark}


\begin{thebibliography}{vGvdG86}

 \bibitem[CS03]{CS}
L. Caporaso, E. Sernesi,
\newblock {\em Recovering plane curves from their bitangents}.
\newblock Journal of Algebraic Geometry 12: 225-244, 2003.

 \bibitem[CS03b]{CS2}
L. Caporaso, E. Sernesi,
\newblock {\em Characterizing curves by their odd theta-characteristics}.
\newblock Reine Angew. Math. 562: 101-135, 2003.

\bibitem[Do12]{Do}
I.~Dolgachev,
\newblock {\em Classical Algebraic Geometry: a Modern View}.
 \newblock  Cambridge Univ. Press,  2012.

\bibitem[DO88]{Do2}
I.~Dolgachev, D. Ortland
\newblock {\em Point Sets in Projective Spaces and Theta Functions}.
 \newblock  Soci\' et\' e Math\' ematique de France,  1988.

\bibitem[Fay79]{fayriemann}
J.~Fay,
\newblock {\em On the {R}iemann-{J}acobi formula}.
\newblock Nachr. Akad. Wiss. G\"ottingen Math.-Phys. Kl. II, (5):61--73,
  1979.

\bibitem[vGvdG86]{vgvdg}
B.~van Geemen and G.~van~der Geer,
\newblock {\em Kummer varieties and the moduli spaces of abelian varieties}.
\newblock Amer. J. Math, 108(3):615--641, 1986.

\bibitem[GSM04]{gsm}
S.~Grushevsky  and R.~Salvati~Manni,
\newblock {\em Gradients of odd theta functions}. 
\newblock Reine Angew. Math. 573, 45--59, 2004.

\bibitem[Gu11]{gu}
J.~Gu\`ardia,
 \newblock {\em On the Torelli problem and Jacobian nullwerte in genus three}.  
 \newblock Mich. Math. J. 60, No. 1, 51-65, 2011.

\bibitem[H55]{hesse}
O.~Hesse,
\newblock {\em \"{U}ber die Doppeltangenten der Curven vierter Ordnung}.
\newblock J. Reine Angew. Math. 49, 279-332, 1855.

\bibitem[Igu72]{ig1}
J.-I. Igusa,
\newblock {\em Theta functions}. Grundlehren der
  Mathematischen Wissenschaften, Volume 194 
\newblock Springer-Verlag, New York, 1972.

\bibitem[Igu80]{ig80}
J.-I. Igusa,
\newblock {\em On Jacobi's derivative formula and its generalizations}.
\newblock Amer. J. Math, 102(2):409--446, 1980.

\bibitem[Igu81]{IgNu}
J.-I. Igusa,
\newblock  {\em On the Nullwerte of Jacobians of odd theta functions}.  
\newblock  Algebraic geometry, int. Symp. Centen. Birth F. Severi, Roma 1979, Symp. Math. 24, 83-95, 1981.

\bibitem[Igu83]{ig2}
J.-I. Igusa,
\newblock {\em Multiplicity one theorem and problems related to Jacobi' s formula}.
Amer. J. Math.
%\newblock {\em Amer. J. Math.},
105:409--446,  157-187, 1983.

\bibitem[L05]{lh}
D. Lehavi,
\newblock {\em Any smooth plane quartic can be reconstructed from its bitangents}.
\newblock {\em Israel Journal of Mathematics}, 146 (1), 371-379, 2005.

 \bibitem[PSV11]{PSV} 
 D.~Plaumann, B.~ Sturmfels and C. Vinzant,
\newblock {\em Quartic curves and their bitangents}.  
\newblock Symb. Comput.  46, 712--733,  2011.
 
\bibitem[RF74]{rafabook}
H.~Rauch and H.~Farkas,
\newblock {\em Theta functions with applications to {R}iemann surfaces}.
\newblock The Williams\thinspace \&\thinspace Wilkins Co., Baltimore, Md.,
  1974.

\bibitem[SM83]{sm3}
R.~Salvati~Manni,
\newblock {\em On the nonidentically zero {N}ullwerte of {J}acobians of theta
  functions with odd characteristics}.
\newblock  Adv. in Math, 47(1):88--104, 1983.

\bibitem[SM85]{smrel}
R.~Salvati~Manni,
\newblock {\em On the dimension of the vector space {${\mathbb C}[\theta_m]_4$}}.
\newblock Nagoya Math. J, 98:99--107, 1985.

\bibitem[SM94]{sm1}
R.~Salvati~Manni,
\newblock {\em Modular varieties with level {$2$} theta structure}.
\newblock Amer. J. Math. 116(6):1489--1511, 1994.

\end{thebibliography}
\end{document}